\theoremstyle{plain}
\newtheorem{theorem}{Theorem}[section]
\newtheorem{proposition}[theorem]{Proposition}
\newtheorem{lemma}[theorem]{Lemma}
\newtheorem{remark}[theorem]{Remark}
\numberwithin{equation}{section}
\newcommand*{\dif}{\mathop{}\!\mathrm{d}}
\newcommand{\na}{\nabla}
\newcommand{\la}{\langle}
\newcommand{\ra}{\rangle}
\def \fein {f_{\rm in}}
\def \psin {\psi_{\rm in}}
\def \rin {\rho_{\rm in}}
\def \p {\partial}
\def \R {\mathbb{R}}
\def \LL {\mathcal{L}}
\def \RR {\mathcal{R}}
\def \DD {\mathcal{D}}
\def \ep {\varepsilon}
\def \O {\Omega}
\title[Asymptotics of dissipative kinetic equations in bounded domains]{A note on asymptotics of linear dissipative kinetic equations in bounded domains} 
\author{Yuzhe Zhu} 
\date{October 10, 2024} 
\address{Department of Pure Mathematics and Mathematical Statistics, University of Cambridge, Wilberforce Road, Cambridge CB3 0WA, UK}
\curraddr{Department of Mathematics, University of Chicago, 5734 S University Ave, Chicago Illinois 60637, USA}
\email{yuzhezhu@uchicago.edu}
\begin{document}


\begin{abstract}
We establish $L^2$-exponential decay properties for linear dissipative kinetic equations, including the time-relaxation and Fokker-Planck models, in bounded spatial domains with general boundary conditions that may not conserve mass. Their diffusion asymptotics in $L^2$ is also derived under general Maxwell boundary conditions. The proofs are simply based on energy estimates together with previous ideas from $L^2$-hypocoercivity and relative entropy methods. 
\end{abstract}
\maketitle

\section{Introduction}\label{intro}
Consider the time-relaxation operator $\LL_1$ and the Fokker-Planck operator $\LL_2$ defined by 
\begin{align*}
&\LL_1f(v) := \int_{\R^d}f(v)\dif\mu-f(v),\\
&\LL_2f(v) := (\na_v-v)\cdot\na_v f(v),
\end{align*} 
for $f(v)\in C_c^\infty(\R^d)$. 
Here we write 
\begin{align*}
\mu(v)&:=(2\pi)^{-d/2}e^{-|v|^2/2},\\
\dif\mu&:=\mu(v)\dif v
\end{align*}
to denote the Gaussian function and the Gaussian measure, respectively. Let $\O$ be a bounded $C^{1,1}$-domain in $\R^d$, and $n_x$ be the unit outward normal vector at $x\in\p\O$. We split the phase boundary $\Sigma:=\p\O\times\R^d$ into the outgoing part $\Sigma_+$, incoming part $\Sigma_-$ and grazing part $\Sigma_0$, which are precisely defined as follows,  
\begin{align*}
\Sigma_\pm&:= \left\{ (x,v)\in\Sigma :\, \pm\;\! n_x\cdot v >0 \right\}, \\
\Sigma_0&:= \left\{ (x,v)\in\Sigma :\, n_x\cdot v =0 \right\}. 
\end{align*}
Let $i=1$ or $2$. We are concerned with asymptotic behaviours of the solution $f_\ep=f_\ep(t,x,v)$, $(t,x,v)\in\R_+\times\O\times\R^d$, to the following dissipative kinetic equation with the parameter $\ep\in(0,1]$,  
\begin{align}\label{rfp}
\left\{ 
\begin{aligned}
\ &\ep\p_t f_\ep+v\cdot\na_xf_\ep -\na_x\phi\cdot\na_vf_\ep =\ep^{-1}\LL_i f_\ep \quad {\rm in\ } \R_+\times\O\times\R^d,  \\
\ &\,f_\ep|_{t=0} =\fein \quad {\rm in\ } \O\times\R^d, \\
\ &\,f_\ep=\alpha\DD f_\ep +\beta\RR f_\ep  {\quad\rm in\ }\R_+\times\Sigma_-. \\
\end{aligned}
\right. 
\end{align} 
Here the external potential $\phi\in C^{0,1}(\O)$, and the (space-dependent) coefficients $\alpha,\beta:\p\O\rightarrow[0,1]$ in the boundary condition are measurable and satisfy $\alpha+\beta\le1$ and $\alpha+\beta\in C^{0,1}(\p\O)$. For $f(x,v)\in C_c^\infty(\Sigma)$, the specular reflection operator $\RR$ is defined by 
\begin{align*}
\RR f(x,v):=f(x,v-2(n_x\cdot v)\;\!n_x), 
\end{align*}
and the diffuse reflection operator $\DD$ is defined by 
\begin{align*}
\DD f(x):=c_w \int_{\R^d}f(x,v)\,(n_x\cdot v)_+\dif\mu,
\end{align*}
where the constant $c_w:=\sqrt{2\pi}$ is chosen so that $c_w\int_{\R^d}(n_x\cdot v)_\pm\dif\mu=1$. Note that $(n_x\cdot v)_+$ and $(n_x\cdot v)_-$ denote the positive and negative parts of the function $n_x\cdot v$, respectively. For brevity, we also set 
\begin{align*}
\dif m:=e^{-\phi(x)}\mu(v)\dif x\dif v,\\
\|\cdot\|:=\|\cdot\|_{L^2(\O\times\R^d,\dif m)}. 
\end{align*}
Throughout the article, we assume that the solutions mentioned, which exist in bounded spatial domains in the weak sense, allow us the freedom to manipulate boundary terms. Indeed, for any initial data  $\fein\in L^2(\O\times\R^d,\dif m)$, there is a solution $f\in C^0(\R_+;L^2(\O\times\R^d,\dif m))$ satisfying \eqref{rfp} in the sense of distribution that admits suitable traces on $\p\O\times\R^d$. One may refer to \cite{Mischler,DHHM} for rigorous details in this regard. 

We aim in this work at investigating the long-time asymptotics of the solution $f_\ep$ to \eqref{rfp} and its limit as $\ep\rightarrow0$. When $\alpha+\beta=1$, \eqref{rfp} is associated with the so-called \emph{Maxwell boundary condition} which conserves the mass. Generally, \eqref{rfp} is not conservative. Our first result gives hypocoercive estimates with uniform-in-$\ep$ exponential decay rates for solutions to \eqref{rfp} associated with general boundary conditions. 

\begin{theorem}\label{longtime}
Let $\ep\in(0,1]$, $\fein\in L^2(\O\times\R^d,\dif m)$, $M_0:=\left(\int_\O e^{-\phi}\dif x\right)^{\!-1} \int_{\O\times\R^d} \fein\dif m$, and the functions $\alpha,\beta:\p\O\rightarrow[0,1]$ satisfy $\alpha+\beta\le1$ and $\alpha+\beta\in C^{0,1}(\p\O)$, and $f_\ep$ be the solution to \eqref{rfp}. If $\alpha+\beta=1$, then there are some constants $C,\lambda>0$ depending only on $d,\O,\|\phi\|_{C^{0,1}(\O)}$ such that for any $t>0$, 
\begin{align*}
\|f_\ep(t)-M_0\|  \le C e^{-\lambda t}\|\fein-M_0\|. 
\end{align*}
If $\alpha+\beta\in[0,\delta]$ for some constant $\delta\in[0,1)$, then there are some constants $C',\lambda'>0$ depending only on $d,\O,\|\phi\|_{C^{0,1}(\O)},\delta,\|\na(\alpha+\beta)\|_{L^\infty(\p\O)}$ such that for any $t>0$, 
\begin{align*}
\|f_\ep(t)\| \le C' e^{-\lambda't}\|\fein\|.  
\end{align*}
\end{theorem}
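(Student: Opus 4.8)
The plan is to run an $L^2$-hypocoercivity scheme in the spirit of Dolbeault--Mouhot--Schmeiser, adapted to the boundary, combining a plain energy identity that captures the microscopic and boundary dissipation with a corrector that recovers the missing macroscopic coercivity. Write $T:=v\cdot\na_x-\na_x\phi\cdot\na_v$ and let $\Pi f:=\int_{\R^d}f\dif\mu$ be the velocity average, i.e.\ the $\|\cdot\|$-orthogonal projection onto $\ker\LL_i$. Both collision operators are symmetric, nonpositive and have a spectral gap on $L^2(\dif\mu)$, namely $-\int(\LL_i f)f\dif\mu\ge\|f-\Pi f\|_{L^2(\dif\mu)}^2$ (an identity for $\LL_1$, the Gaussian Poincar\'e inequality for $\LL_2$). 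Testing \eqref{rfp} with $f_\ep$, the transport part is skew-adjoint up to a boundary flux, which gives
\begin{equation*}
\frac\ep2\frac{\dif}{\dif t}\|f_\ep\|^2+\frac12\Big(\int_{\Sigma_+}f_\ep^2\dif\xi-\int_{\Sigma_-}f_\ep^2\dif\xi\Big)=-\ep^{-1}\|(I-\Pi)f_\ep\|^2,\qquad \dif\xi:=|n_x\cdot v|\,e^{-\phi}\mu\dif\sigma\dif v.
\end{equation*}
The first task is to analyse the boundary bracket through the pointwise identity $f_\ep^2=\alpha(\DD f_\ep)^2+\beta(\RR f_\ep)^2-\alpha\beta(\DD f_\ep-\RR f_\ep)^2$ on $\Sigma_-$ (valid when $\alpha+\beta=1$, an inequality otherwise), a Cauchy--Schwarz bound for $\DD$ whose defect is the outgoing anisotropy, and the measure-preserving involution $v\mapsto v-2(n_x\cdot v)n_x$ for $\RR$. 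One finds $\int_{\Sigma_-}f_\ep^2\dif\xi\le\int_{\Sigma_+}(\alpha+\beta)f_\ep^2\dif\xi$, so the bracket is nonnegative: in the conservative case it equals $\int_{\Sigma_+}\alpha(f_\ep-\DD f_\ep)^2\dif\xi+\int_{\Sigma_-}\alpha\beta(\DD f_\ep-\RR f_\ep)^2\dif\xi$, while when $\alpha+\beta\le\delta<1$ it dominates $(1-\delta)\int_{\Sigma_+}f_\ep^2\dif\xi$.

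Since the energy identity only sees $(I-\Pi)f_\ep$, the second step recovers the macroscopic part. Let $u=u(t,\cdot)$ solve $-\na_x\cdot(e^{-\phi}\na_x u)=e^{-\phi}(\Pi f_\ep-\bar a)$ in $\O$, with Neumann data $\p_n u=0$ and $\bar a=M_0$ in the conservative case (solvable because $\int f_\ep\dif m$ is conserved), and Dirichlet data $u=0$ and $\bar a=0$ in the dissipative case. Set $\mathcal J[f_\ep]:=\int(v\cdot\na_x u)f_\ep\dif m$. Two identities drive the computation of $\tfrac{\dif}{\dif t}\mathcal J$: first $\LL_i(v\cdot\na_x u)=-v\cdot\na_x u$, so the collision term merely reproduces a multiple of $\mathcal J$; second $T(v\cdot\na_x u)=v^{\!\top}(\na_x^2u)v-\na_x\phi\cdot\na_x u$, whose velocity average equals $e^{\phi}\na_x\cdot(e^{-\phi}\na_x u)$, so pairing with $\Pi f_\ep$ and invoking the elliptic equation produces exactly the coercive term $-\|\Pi f_\ep-\bar a\|^2$. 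The remaining interior contributions, the cross terms with $(I-\Pi)f_\ep$ and the $\p_t u$ term (estimated via $\ep\p_t\Pi f_\ep=-\Pi T(I-\Pi)f_\ep$), are bounded by $H^2$-elliptic regularity on the $C^{1,1}$ domain with Lipschitz weight and absorbed by Young's inequality.

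I would then close with the modified functional $\mathcal H[f_\ep]:=\tfrac12\|f_\ep-\bar a\|^2+\eta\ep\,\mathcal J[f_\ep]$, which for $\eta$ small is comparable to $\|f_\ep-\bar a\|^2=\|\Pi f_\ep-\bar a\|^2+\|(I-\Pi)f_\ep\|^2$. Summing the two estimates gives, up to the boundary contributions, $\tfrac{\dif}{\dif t}\mathcal H\le-\kappa(\ep^{-2}\|(I-\Pi)f_\ep\|^2+\|\Pi f_\ep-\bar a\|^2)\le-\lambda\mathcal H$, whence Gr\"onwall together with the comparability yields the two stated bounds (with $\bar a=M_0$ stationary in the first case and $\bar a=0$ in the second).

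The main difficulty is the boundary term produced when $T$ is integrated by parts in $\tfrac{\dif}{\dif t}\mathcal J$, namely $\int_\Sigma(v\cdot\na_x u)(n_x\cdot v)f_\ep\,e^{-\phi}\mu\dif\sigma\dif v$. In the dissipative case this is routine: bounding it by $\|\na_x u\|_{L^2(\p\O)}(\int_{\Sigma_+}f_\ep^2\dif\xi)^{1/2}$ and absorbing it into the strong boundary dissipation $(1-\delta)\ep^{-1}\int_{\Sigma_+}f_\ep^2\dif\xi$ suffices. The conservative case is the crux, because the boundary bracket there may vanish (e.g.\ pure specular). The remedy is the Neumann choice $\p_n u=0$, which forces $v\cdot\na_x u=v\cdot\na_\tau u$ on $\p\O$; substituting the boundary condition on $\Sigma_-$ and using that $\int_{\pm n_x\cdot v>0}(v\cdot\na_\tau u)(n_x\cdot v)\dif\mu=0$ by tangential oddness annihilates both the specular contribution and every macroscopic piece, leaving only $\int_{\Sigma_+}\alpha(v\cdot\na_\tau u)(n_x\cdot v)(f_\ep-\DD f_\ep)\,e^{-\phi}\mu\dif\sigma\dif v$. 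By Cauchy--Schwarz this is matched exactly by the diffuse anisotropy dissipation $\int_{\Sigma_+}\alpha(f_\ep-\DD f_\ep)^2\dif\xi$ isolated in the first step, while the residual factor $\|\na_\tau u\|_{L^2(\p\O)}\lesssim\|\Pi f_\ep-M_0\|$ (trace plus elliptic regularity on the $C^{1,1}$ domain) is absorbed into the macroscopic coercivity for $\eta$ small. Establishing this cancellation-and-absorption is where the precise structure of $\DD$ and $\RR$ and the regularity hypotheses on $\O$ and $\phi$ are genuinely used.
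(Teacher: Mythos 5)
Your proposal is correct and follows the same $L^2$-hypocoercivity scheme as the paper: energy identity with microscopic and boundary dissipation, an elliptic corrector $u$ driven by the velocity average, the auxiliary functional $\int(v\cdot\na_xu)f_\ep\dif m$ added with weight $O(\ep)$, and the key observation that a \emph{tangential} vector field $U$ reduces the boundary term $(v\cdot Uf_\ep,\,n_x\cdot v)_\p$ to $(\alpha\, v\cdot U(f_\ep-\DD f_\ep),(n_x\cdot v)_+)_\p$, which Cauchy--Schwarz matches against the diffuse part of the boundary dissipation (this is exactly the paper's Lemma~\ref{identity}--\ref{identity-e}); your boundary identities for the bracket agree with \eqref{identity2}. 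The one structural difference is the corrector: the paper solves a single problem $u-\Delta_xu+\na_x\phi\cdot\na_xu=\la f_\ep\ra-M_c$ with the Robin condition $n_x\cdot\na_xu+c_bu=0$, $c_b=(1-\alpha-\beta)/(\alpha+\beta)$, chosen precisely so that $\na_xu+c_bun_x$ is tangential and both regimes are handled at once (at the price of the regularization $\alpha+\beta\ge\iota$, $\iota\to0$, and the hypothesis $|\na c_b|\lesssim c_b$, whence the dependence on $\|\na(\alpha+\beta)\|_{L^\infty}$); you instead split into Neumann data for $\alpha+\beta=1$ and Dirichlet data for $\alpha+\beta\le\delta<1$. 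Your split is legitimate and in the dissipative case arguably cleaner: with $u=0$ on $\p\O$ the gradient is purely normal, the boundary term is crudely absorbed by the strong dissipation $(1-\delta)\ep^{-1}\int_{\Sigma_+}f_\ep^2\dif\xi$, no Lipschitz regularity of $\alpha+\beta$ is needed, and the boundary term in the $\p_tu$ estimate (the analogue of \eqref{elliptic3}) vanishes outright; in the conservative case your Neumann choice and the paper's $c_b=0$ coincide. The only points to write out carefully are (i) the $\p_tu$ bound, where with Neumann data one must use that $n_x\cdot\la vf_\ep\ra=(1-\alpha-\beta)c_w^{-1}\DD f_\ep=0$ on $\p\O$ under the Maxwell condition, and (ii) the $H^2$ elliptic regularity and trace bounds on the $C^{1,1}$ domain, which the paper isolates in Lemma~\ref{elliptic-est}.
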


The second result shows that the asymptotic dynamics of \eqref{rfp} (with $\beta=1-\alpha$) as $\ep\rightarrow0$ is governed by the following parabolic equation associated with the Neumann boundary condition, 
\begin{align}\label{limiteq}
\left\{ 
\begin{aligned}
\ &\p_t\rho=\Delta_x\rho -\na_x\phi\cdot\na_x\rho {\quad\rm in\ } \R_+\times\O, \\
\ &\, \rho|_{t=0} = \rin  {\quad\rm in\ } \O,\\
\ &\, n_x\cdot\na_x\rho=0 {\quad\rm in\ } \R_+\times\p\O.\\
\end{aligned}
\right. 
\end{align} 

\begin{theorem}\label{asympto}
Let $\ep\in(0,1]$, $\fein\in L^2(\O\times\R^d,\dif m)$, and $\alpha:\p\O\rightarrow[0,1]$. Suppose that $f_\ep$ is the solution to \eqref{rfp} (with $\beta=1-\alpha$) associated with the Maxwell boundary condition $f_\ep=\alpha\DD f_\ep+(1-\alpha)\RR f_\ep$ on $\R_+\times\Sigma_-$, and $\rho$ is the solution to \eqref{limiteq} associated with the initial data $\rin:=\int_{\R^d}\fein\dif\mu$. Then we have the strong convergence as $\ep\rightarrow0$ that for any $T>0$ and any positive function $\tau$ defined on $(0,1]$ such that $\tau(\ep)/\ep^2\rightarrow\infty$ as $\ep\rightarrow0$, 
\begin{align*}
\|f_\ep-\rho\|_{L^2([0,T]\times\O\times\R^d,\;\!\dif t\dif m)} \rightarrow 0,\\ 
\|f_\ep-\rho\|_{L^\infty([\tau(\ep),\infty);L^2(\O\times\R^d,\;\!\dif m))} \rightarrow 0.
\end{align*}  
If $\fein\in L_v^2H_x^2(\O\times\R^d,\dif m)$ satisfies $|v|\na_x\fein\in L^2(\O\times\R^d,\dif m)$ and the compatibility condition that $\fein$ depends only on the variable $x$ in $\p\O\times\R^d$, then for any $T>0$ and for $T_\ep:=\ep^2|\log\sqrt{\ep}|$, 
\begin{align*}
\|f_\ep-\rho\|_{L^2([0,T]\times\O\times\R^d,\;\!\dif t\dif m)} 
\le C \sqrt{\ep T} \left(\|\fein\| +\|(1+|v|)\na_x\fein\| +\|D_x^2\fein\| \right),\\
\|f_\ep-\rho\|_{L^\infty([T_\ep,\infty);L^2(\O\times\R^d,\;\!\dif m))} 
\le C \sqrt{\ep} \left(\|\fein\| +\|(1+|v|)\na_x\fein\| +\|D_x^2\fein\| \right). 
\end{align*}
If $\fein$ is well-prepared in the sense that $\fein\in H_x^2(\O)$ depends only on the variable $x$ in $\O\times\R^d$, then 
\begin{align*}
\|f_\ep-\rho\|_{L^\infty(\R_+;\;\!L^2(\O\times\R^d,\;\!\dif m))} 
\le C\sqrt{\ep} \|\fein\|_{H_x^2(\O)}. 
\end{align*}
Here the constant $C>0$ depends only on $d,\O,\|\phi\|_{C^{0,1}(\O)}$. 
\end{theorem}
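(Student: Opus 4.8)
The plan is to run a first-order Chapman--Enskog expansion and reduce the whole statement to one weighted $L^2$ energy estimate for a remainder, using the two structural ingredients behind Theorem~\ref{longtime}: the transport operator $\mathcal{T}:=v\cdot\na_x-\na_x\phi\cdot\na_v$ is skew-adjoint for $\dif m$ up to the boundary flux $\tfrac12\int_\Sigma(n_x\cdot v)(\cdot)^2\dif\gamma$ (with $\dif\gamma:=e^{-\phi}\mu\dif v\dif\sigma_x$), and $\LL_i$ is symmetric and nonpositive with the spectral gap $\la-\LL_i f,f\ra_{L^2(\dif\mu)}\ge\|(I-\Pi)f\|_{L^2(\dif\mu)}^2$, where $\Pi f:=\int_{\R^d}f\dif\mu$ projects onto local equilibria. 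First I would record the plain energy identity for \eqref{rfp}: testing against $f_\ep$ and using the dissipativity of the Maxwell condition (as in Theorem~\ref{longtime}) gives
\begin{equation*}
\|f_\ep(t)\|^2+2\ep^{-2}\int_0^t\|(I-\Pi)f_\ep\|^2\dif s\le\|\fein\|^2,
\end{equation*}
so the microscopic part is $O(\ep)$ in $L^2([0,T]\times\O\times\R^d)$. Integrating \eqref{rfp} against $\dif\mu$ and against $v\dif\mu$ and inverting $\LL_i$ on $\mathrm{Ran}(I-\Pi)$, where $\LL_i^{-1}(v_k)=-v_k$ and $\int v\otimes v\dif\mu=\mathrm{Id}$, produces the macroscopic balance $\p_t\Pi f_\ep+\na_x\cdot j_\ep-\na_x\phi\cdot j_\ep=0$ with flux $j_\ep:=\ep^{-1}\int_{\R^d}vf_\ep\dif\mu=-\na_x\Pi f_\ep+O(\ep)$; passing to the limit identifies the limit of $\Pi f_\ep$ as the unique solution $\rho$ of \eqref{limiteq}, the Neumann condition coming from the vanishing of the normal flux at $\p\O$. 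This already yields the qualitative convergence for general $\fein\in L^2$, once combined with a density argument.

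For the quantitative rates I would not compare $f_\ep$ with $\rho$ directly but with the first-order ansatz $\tilde f_\ep:=\rho-\ep\,v\cdot\na_x\rho$, where $\rho$ solves \eqref{limiteq}. Using $\LL_i(v\cdot\na_x\rho)=-v\cdot\na_x\rho$, $\na_v(v\cdot\na_x\rho)=\na_x\rho$ and $\int v_jv_k\dif\mu=\delta_{jk}$, a direct computation shows that $\tilde f_\ep$ solves \eqref{rfp} up to the \emph{purely microscopic} residual
\begin{equation*}
\mathrm{Res}_\ep=-\ep\sum_{j,k}(v_jv_k-\delta_{jk})\p_{x_jx_k}\rho-\ep^2\,v\cdot\na_x\p_t\rho,\qquad \Pi\,\mathrm{Res}_\ep=0,
\end{equation*}
where $\Pi\,\mathrm{Res}_\ep=0$ holds \emph{because} $\rho$ satisfies \eqref{limiteq}. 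The energy identity for $E_\ep:=f_\ep-\tilde f_\ep$ then reads
\begin{equation*}
\frac{\ep}{2}\frac{\dif}{\dif t}\|E_\ep\|^2+\frac12\int_\Sigma(n_x\cdot v)E_\ep^2\dif\gamma+\ep^{-1}\|(I-\Pi)E_\ep\|^2\le|\la\mathrm{Res}_\ep,(I-\Pi)E_\ep\ra|,
\end{equation*}
and the Young bound $|\la\mathrm{Res}_\ep,(I-\Pi)E_\ep\ra|\le\tfrac12\ep^{-1}\|(I-\Pi)E_\ep\|^2+C\ep\|\mathrm{Res}_\ep\|^2$ absorbs the forcing into the dissipation, leaving an interior contribution of size $\ep^2\|D_x^2\rho\|^2$ after division by $\ep$. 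The subdominant $\ep^2\,v\cdot\na_x\p_t\rho$ term I would treat by integrating $\la v\cdot\na_x\p_t\rho,E_\ep\ra$ by parts in time, trading the time derivative for the equation of $E_\ep$, so that only second spatial derivatives of $\rho$ ever enter and no $H_x^3$ bound is needed.

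The decisive term is the boundary flux, and this is where I expect the main difficulty. Because $\rho$ satisfies $n_x\cdot\na_x\rho=0$, one computes $\RR\tilde f_\ep=\tilde f_\ep$ and $\DD\tilde f_\ep=\rho$ on $\p\O$ (the normal moment $c_w\int v(n_x\cdot v)_+\dif\mu=\tfrac12 n_x$ is killed by the Neumann condition), so $\tilde f_\ep$ fails the Maxwell condition only by $b_\ep=-\alpha\ep\,v\cdot\na_x\rho$ on $\Sigma_-$. Inserting the inhomogeneous relation $E_\ep=\alpha\DD E_\ep+(1-\alpha)\RR E_\ep-b_\ep$ into $\int_\Sigma(n_x\cdot v)E_\ep^2\dif\gamma$ splits it into the nonnegative conservative Maxwell form used in Theorem~\ref{longtime} plus cross terms in $b_\ep$; the latter are absorbed by the boundary dissipation at the cost of a forcing $\le C\|b_\ep\|_{L^2(\Sigma,\,|n_x\cdot v|\dif\gamma)}^2\le C\ep^2\|\na_x\rho|_{\p\O}\|^2$. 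After dividing the energy inequality by $\ep$ and integrating in time, this $O(\ep^2)$ boundary forcing becomes $O(\ep)$, whereas the interior forcing stays $O(\ep^2)$; this asymmetry is exactly the origin of the $\sqrt\ep$ rate. Concretely,
\begin{equation*}
\|E_\ep(t)\|^2\lesssim\|E_\ep(0)\|^2+\ep\int_0^t\|\na_x\rho|_{\p\O}\|^2\dif s+\ep^2\int_0^t\|D_x^2\rho\|^2\dif s,
\end{equation*}
and $\|f_\ep-\rho\|\le\|E_\ep\|+\ep\|v\cdot\na_x\rho\|$. The point is that the first-order corrector already matches the boundary data to $O(\ep)$, so no explicit Knudsen-layer corrector is required, but the residual boundary mismatch cannot be pushed below $\sqrt\ep$.

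The three regimes then follow by feeding in the behaviour of $\rho$ and the initial mismatch $E_\ep(0)=\fein-\rin+\ep\,v\cdot\na_x\rin$. When $\fein\in H_x^2$ is well-prepared, $E_\ep(0)=\ep\,v\cdot\na_x\fein=O(\ep)$, and the exponential relaxation of the Neumann heat flow \eqref{limiteq} makes $\int_0^\infty(\|\na_x\rho|_{\p\O}\|^2+\|D_x^2\rho\|^2)\dif s\le C\|\fein\|_{H_x^2(\O)}^2$ finite through parabolic energy estimates and the trace theorem, giving the uniform-in-time bound $\|f_\ep-\rho\|\le C\sqrt\ep\|\fein\|_{H_x^2(\O)}$. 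For data that is only $L_v^2H_x^2$ with $|v|\na_x\fein\in L^2$ and compatible on $\p\O$, the term $(I-\Pi)\fein$ makes $E_\ep(0)$ of size $O(1)$, producing an initial layer; here I would invoke the $\ep^{-2}$-scaled coercivity (equivalently Theorem~\ref{longtime} applied to the fast microscopic dynamics) to show its effect decays like $e^{-\lambda t/\ep^2}$, hence is $\le\sqrt\ep$ once $t\ge T_\ep=\ep^2|\log\sqrt\ep|$ and contributes only $O(\ep^2)$ to $\int_0^T\|\cdot\|^2\dif s$; this yields the $L^2_t$ rate $\sqrt{\ep T}$ and the $L^\infty_t$ rate on $[T_\ep,\infty)$, the weighted norm $\|(1+|v|)\na_x\fein\|$ entering precisely to control $v\cdot\na_x\fein$ in the initial-layer and corrector terms, and the compatibility condition removing the boundary--initial corner mismatch at $t=0$. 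Finally, the purely qualitative convergence for general $\fein\in L^2$ follows by approximating $\fein$ in $L^2(\dif m)$ by such regular data, using the uniform stability $\|f_\ep(t)\|\le\|\fein\|$ and the quantitative estimate on the regular part, with $\tau(\ep)/\ep^2\to\infty$ guaranteeing that the initial layer has fully relaxed.
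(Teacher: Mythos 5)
Your overall architecture is sound and genuinely different from the paper's in one respect: you build the explicit Chapman--Enskog corrector $\tilde f_\ep=\rho-\ep\, v\cdot\na_x\rho$ and run an energy estimate on the remainder, whereas the paper works with the bare relative entropy $\|f_\ep-\rho-\psi_\ep\|^2$ and eliminates the flux term $\ep^{-1}\la vf_\ep\ra+\na_x\la f_\ep\ra$ through the second moment equation \eqref{hydroFP2} plus an integration by parts in time. The two routes cost the same regularity of $\rho$ and give the same $\sqrt\ep$ rate; your corrector makes the origin of the rate (the $O(\ep)$ boundary mismatch $\alpha\ep\, v\cdot\na_x\rho$ versus the $O(\ep^2)$ interior residual) more transparent. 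One caveat on the boundary step: absorbing the cross term between $b_\ep$ and the trace of $E_\ep$ is not a naive Cauchy--Schwarz, because that would produce the uncontrolled outgoing trace $\left(E_\ep^2,\,(n_x\cdot v)_+\right)_\p$ rather than the nonnegative dissipated quantity. You need the cancellation that a tangential vector field $U$ ($n_x\cdot U=0$) pairs to zero against the $\DD$-part of the trace, so that only the controlled difference $E_\ep-\DD E_\ep$ survives; this is exactly the content of Lemmas~\ref{identity} and~\ref{identity-e}, and your sketch should invoke it explicitly.

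The genuine gap is the initial layer for ill-prepared data. Your energy inequality gives $\|E_\ep(t)\|^2\lesssim\|E_\ep(0)\|^2+O(\ep)$, and for general data $E_\ep(0)=\fein^\perp+\ep\, v\cdot\na_x\rin$ is $O(1)$, so the estimate as written yields nothing. You propose to fix this by "invoking the $\ep^{-2}$-scaled coercivity (equivalently Theorem~\ref{longtime} applied to the fast microscopic dynamics) to show its effect decays like $e^{-\lambda t/\ep^2}$", but this is not an argument: Theorem~\ref{longtime} gives decay at rate $e^{-\lambda t}$ uniformly in $\ep$, not $e^{-\lambda t/\ep^2}$, and the coercivity of $\LL_i$ alone does not show that the $O(1)$ microscopic initial datum leaves the remainder exponentially fast, because the stiff transport term $\ep^{-1}v\cdot\na_x$ feeds the microscopic part into the macroscopic one. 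What is actually needed is to construct the layer corrector $\psi_\ep$ solving $\p_t\psi_\ep=\ep^{-2}\LL_i\psi_\ep$ with datum $\fein-\la\fein\ra$, prove the weighted decay $\|(1+|v|)\psi_\ep(t)\|\lesssim e^{-c_0t/\ep^2}\|(1+|v|)\psin\|$ (this requires a separate computation for the Fokker--Planck case, cf.\ Lemma~\ref{layer}), and then estimate the new coupling term $\ep^{-1}\left(f_\ep,\,v\cdot\na_x\psi_\ep\right)$ that appears because $\psi_\ep$ does not satisfy the transport equation; after time integration this coupling contributes $O(\ep)\|(1+|v|)\na_x\fein\|$, which is precisely where the weighted hypothesis enters, while the compatibility condition guarantees $\psi_\ep|_{\Sigma}$ is independent of $v$ so that the boundary identities still apply. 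Without this explicit construction, the $L^\infty([T_\ep,\infty))$ bound and the $\sqrt{\ep T}$ bound in the second part of the statement do not follow from your energy inequality.
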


The model \eqref{rfp}, with $i=1$ or $2$, is a kinetic description of the probability distribution of a certain system of interacting particles, submitted to an external force derived from the potential $\phi$, at time $t$ located at the position $x$ in the physical space $\O\subset\R^d$ with the velocity $v\in\R^d$. It is complemented with the proper boundary conditions corresponding to the behaviour of particles at the boundary. The boundary condition in \eqref{rfp} is prescribed as zero influx when $\alpha=\beta=0$, and it is generally governed by the balance relations between the distribution of particles at the incoming and outgoing boundaries. The operator $\LL_1$ serves as a prototype for describing collision processes of linear relaxation type, including the case of neutron transport. The operator $\LL_2$ derived from the Ornstein-Uhlenbeck velocity process captures the impact of collisions with  particles of a surrounding bath. The collision operators act only on the velocity variable and conserve local mass. They are both dissipative in the velocity variable and contribute to the relaxation process towards local equilibrium. The \emph{hypocoercivity} theory seeks to understand how the interaction between transport and collisions leads to time-decay convergence for inhomogeneous collisional kinetic equations such as \eqref{rfp}. The (small) parameter $\ep\in(0,1]$ representing the ratio of the collisional mean free path and the observation length is introduced to measure the balance between the transport part $v\cdot\na_x$ and the collision part $\LL_i$, while diffusion phenomena can be observed on long time scales of order $\ep^{-1}$. This regime with $\ep\rightarrow0$ is often referred to as the \emph{diffusion limit}. 

In the absence of boundary conditions, the systematic development of $H^1$-hypocoercivity for kinetic diffusion equations (such as \eqref{rfp} with $i=2$) can be found in \cite{Villanihypo}, which typically involves high order regularity estimates for solutions. Nevertheless loss of regularity may occur near the grazing boundaries; see \cite[Appendix A]{YZ}. The $L^2$-hypocoercivity instead provides $L^2$-decay for a large class of  dissipative kinetic equations without assuming regularity on the solution; see for instance \cite{Herau,DMS1,EGKM,DMS2,BDMMS}, where the spatial domain is either the periodic box $\mathbb{T}^d$ or the whole space $\R^d$. The approaches given in \cite{BCMT} and \cite{DHHM} yield $L^2$-decay estimates when considering boundary conditions that conserve mass. In \cite{DD}, $L^2$-decay estimates were established for a specific scenario involving exponentially decaying influx boundary conditions. We also point out that, for kinetic Fokker-Planck equations, constructive exponential convergence result was given in \cite{AM2021} by means of a Poincar\'e-type inequality under the zero influx boundary condition, and nonconstructive estimates in a general non-conservative setting were given in \cite{FGM} by revisiting the Krein-Rutman theory. 

The mathematical study of approximating linear kinetic equations through macroscopic diffusion equations has been investigated in previous works such as \cite{LK,BLP,BSS,Degond}. It was noticed in \cite{WG} that the presence of grazing boundaries may cause a breakdown in the diffusive asymptotics within the $L^\infty$ framework. We refer to \cite{Wu3,Ouyang} for recent development of the diffusion limit of the neutron transport equation associated with boundary conditions. In addition to the analysis through formal expansions as employed in the aforementioned works, the relative entropy method has also emerged as a valuable approach in addressing the hydrodynamic asymptotics; see for instance \cite{BGL,LSR}. 

Our study draws inspiration from the $L^2$-hypocoercivity and relative entropy methods used in \cite{AZ} which focused on a nonlinear kinetic Fokker-Planck model with the spatial domain of either $\mathbb{T}^d$ or $\R^d$. By making simple observations on the boundary terms and employing basic energy estimates, we are able to derive quantitative $L^2$-hypocoercive estimates uniformly in $\ep\in(0,1]$ and $L^2$-diffusion asymptotics as $\ep\rightarrow0$ for solutions to \eqref{rfp} in the presence of boundary conditions. 

The article is organized as follows. In Section~\ref{pre}, we introduce some notations and present preliminary estimates related to our results. We prove Theorem~\ref{longtime} in Section~\ref{hypocoercive} and Theorem~\ref{asympto} in Section~\ref{diffusionlimit}. Some basic elliptic estimates are recalled in Appendix~\ref{append}. 

\section{Preliminaries}\label{pre} 
\subsection{Notations} 
Let us introduce the notations that will be used. For any couple of (scalar, vector or $d\times d$-matrix valued) functions $\Psi_1,\Psi_2\in L^2\left(\O\times\R^d,\dif m\right)$, we denote their inner product with respect to the measure $\dif m=e^{-\phi}\dif x\dif\mu$ by 
\begin{align*}
\left(\Psi_1,\, \Psi_2\right) :=\int_{\O\times\R^d} \Psi_1\Psi_2\dif m, 
\end{align*}
where the multiplication between the couple in the integrand should be replaced by scalar contraction product, if $\Psi_1,\Psi_2$ is a couple of vectors or matrices. Then $\|\cdot\|$ is its induced norm. Since the domain $\O$ is bounded and the potential function $\phi$ is Lipschitz, the norm $\|\cdot\| $ is equivalent to the norm $\|\cdot\|_{L^2(\O\times\R^d,\dif x\dif\mu)}$. Similarly, with $\dif\sigma_x$ denoting the surface measure on $\p\O$, the inner product on the phase boundary $\p\O\times\R^d$ for $\Psi_1,\Psi_2\in L^2\left(\p\O\times\R^d,e^{-\phi}\dif\sigma_x\dif\mu\right)$ is defined by 
\begin{align*}
\left(\Psi_1,\, \Psi_2\right)_\p:=\int_{\p\O\times\R^d} \Psi_1\Psi_2\,e^{-\phi}\dif\sigma_x\dif \mu. 
\end{align*}
Its induced norm is denoted by $\|\cdot\|_\p$. In this setting, we have 
\begin{align*}
\left(\psi_1,\,(\na_v-v)\psi_2\right) = -\left(\na_v\psi_1,\,\psi_2\right), 
\end{align*}
and the transport operator $v\cdot\na_x-\na_x\phi\cdot\na_v$ satisfies 
\begin{align*}
\left((v\cdot\na_x-\na_x\phi\cdot\na_v)\psi_1,\,\psi_2\right) 
+\left(\psi_1,\, (v\cdot\na_x-\na_x\phi\cdot\na_v)\psi_2\right) 
=\left(\psi_1\psi_2,\, n_x\cdot v\right)_\p, 
\end{align*}
for any scalar functions $\psi_1,\psi_2\in C_c^\infty(\overline{\O}\times\R^d)$. 

Note that if the functions $\Psi_1=\Psi_1(x)$, $\Psi_2=\Psi_2(x)\in C^\infty(\overline{\O})$ are independent the velocity variable, then the inner products simplify to
\begin{align*}
\left(\Psi_1,\, \Psi_2\right) &=\int_{\O} \Psi_1\Psi_2\,e^{-\phi}\dif x, \\
\left(\Psi_1,\, \Psi_2\right)_\p&=\int_{\p\O} \Psi_1\Psi_2\,e^{-\phi}\dif\sigma_x. 
\end{align*}
In particular, for the velocity-independent function $\Psi=\Psi(x)\in C^\infty(\overline{\O})$, the norms simplify to $\|\Psi\|^2=\int_\Omega|\Psi|^2e^{-\phi}\dif x$ and $\|\Psi\|_\p^2=\int_{\p\Omega}|\Psi|^2e^{-\phi}\dif\sigma_x$. These notations are used for brevity.

For $\Psi\in L^1(\R^d,\dif\mu)$, we denote the projection onto Span$\{\mu\}$ and its orthogonal complement by 
\begin{align*}
\la\Psi\ra:=\int_{\R^d} \Psi\dif\mu,\\
\Psi^\perp:=\Psi-\la\Psi\ra.
\end{align*}
Taking the bracket $\la\cdot\ra$ after multiplying the equation in \eqref{rfp} with $1$ and $v$ leads to the following macroscopic equations, 
\begin{align}
&\ep\p_t\la f_\ep\ra+\na_x\cdot\la vf_\ep\ra -\na_x\phi\cdot\la vf_\ep\ra = 0,  \label{hydroFP1} \\
&\ep\p_t\la vf_\ep\ra + \na_x\cdot\la v\otimes vf_\ep\ra -\na_x\phi\cdot\la(v\otimes v-{\rm I})f_\ep\ra = -\ep^{-1}\la vf_\ep\ra.  \label{hydroFP2}
\end{align}

Throughout the rest of the article, the notation $X\lesssim Y$ means that $X\le CY$ for some constant $C>0$ depending only on $d,\O,\|\phi\|_{C^{0,1}(\O)},\delta,\|\na(\alpha+\beta)\|_{L^\infty(\p\O)}$. 

\subsection{Boundary estimates} 
We start by presenting the following two identities, which are applicable to functions subject to general boundary conditions and will play a crucial role in handling the boundary terms in the later estimates. 

\begin{lemma}\label{identity}
Let the functions $\alpha,\beta:\p\O\rightarrow[0,1]$ satisfy $\alpha+\beta\le1$, and the vector $U:\p\O\rightarrow\R^d$ lying in $L^2(\p\O)$ satisfy 
\begin{align*}
n_x\cdot U=0.
\end{align*}
For any function $f:\Sigma\rightarrow\R$ such that $f=\alpha\DD f+\beta\RR f$ on $\Sigma_-$, we have 
\begin{align}
\left(v\cdot U f,\,n_x\cdot v\right)_\p = \left(v\cdot U [(1-\beta)f-\alpha\DD f],\,(n_x\cdot v)_+\right)_\p, \label{identity1}
\end{align} 
and
\begin{align}
\begin{aligned}
\left(f^2,\,n_x\cdot v\right)_\p 
&=\left((1-\beta^2)(f-\DD f)^2,\,(n_x\cdot v)_+\right)_\p + \left((1-(\alpha+\beta)^2)(\DD f)^2,\,(n_x\cdot v)_+\right)_\p \label{identity2} \\
&= \left((\alpha^2+2\alpha\beta)(f-\DD f)^2,\,(n_x\cdot v)_+\right)_\p + \left((1-(\alpha+\beta)^2)f^2,\,(n_x\cdot v)_+\right)_\p. 
\end{aligned}
\end{align} 
\end{lemma}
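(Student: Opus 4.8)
The plan is to prove both identities by direct computation on the outgoing boundary $\Sigma_+$, using the change-of-variables symmetry of the specular reflection map $v\mapsto v-2(n_x\cdot v)n_x$ to convert all integrals over $\Sigma_-$ (where the boundary condition $f=\alpha\DD f+\beta\RR f$ lives) into integrals over $\Sigma_+$. The key structural facts I would record first are: (i) the specular map is an involution that preserves $\mu$ and flips the sign of $n_x\cdot v$, so that for any $g$ one has $\left(g,\,(n_x\cdot v)_-\right)_\p=\left(\RR g,\,(n_x\cdot v)_+\right)_\p$; (ii) by definition $\DD f(x)$ is independent of $v$ and $\RR(\DD f)=\DD f$; (iii) the normalization $c_w\int(n_x\cdot v)_\pm\dif\mu=1$ gives $\DD(\DD f)=\DD f$ and $\left(\DD f,\,(n_x\cdot v)_+\right)_\p=\left((\DD f)\,f,\,(n_x\cdot v)_+\right)_\p$ after using the boundary condition; and (iv) for the odd moment in \eqref{identity1}, since $v\cdot U$ is odd under $v\mapsto v-2(n_x\cdot v)n_x$ when $n_x\cdot U=0$ (because that reflection leaves the tangential component $v\cdot U$ unchanged — so in fact $v\cdot U$ is \emph{even} under specular reflection), I must track the parity carefully.

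\textbf{Proof of \eqref{identity1}.} I would split $\left(v\cdot U f,\,n_x\cdot v\right)_\p=\left(v\cdot U f,\,(n_x\cdot v)_+\right)_\p-\left(v\cdot U f,\,(n_x\cdot v)_-\right)_\p$, then on the incoming part substitute the boundary condition $f=\alpha\DD f+\beta\RR f$ and apply the specular change of variables. Under $v\mapsto v-2(n_x\cdot v)n_x$ the tangential quantity $v\cdot U$ is invariant (since $n_x\cdot U=0$), $(n_x\cdot v)_-$ becomes $(n_x\cdot v)_+$, and $\RR f$ becomes $f$; the diffuse term $\DD f$ is already reflection-invariant. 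Collecting terms, the incoming integral becomes $\left(v\cdot U[\alpha\DD f+\beta f],\,(n_x\cdot v)_+\right)_\p$, and subtracting yields exactly $\left(v\cdot U[(1-\beta)f-\alpha\DD f],\,(n_x\cdot v)_+\right)_\p$, as claimed.

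\textbf{Proof of \eqref{identity2}.} Here I would again write $\left(f^2,\,n_x\cdot v\right)_\p$ as the outgoing minus the incoming contribution. On $\Sigma_-$ I substitute $f=\alpha\DD f+\beta\RR f$, expand the square as $\alpha^2(\DD f)^2+2\alpha\beta(\DD f)(\RR f)+\beta^2(\RR f)^2$, and push everything to $\Sigma_+$ via specular reflection, using $\RR f\mapsto f$ and $\DD f\mapsto\DD f$. The incoming integral thus becomes $\left(\alpha^2(\DD f)^2+2\alpha\beta(\DD f)f+\beta^2 f^2,\,(n_x\cdot v)_+\right)_\p$. Subtracting this from the outgoing integral $\left(f^2,\,(n_x\cdot v)_+\right)_\p$ and regrouping gives the second displayed form in \eqref{identity2}; the first form then follows by algebraic rearrangement, writing $f^2=(f-\DD f)^2+2(\DD f)f-(\DD f)^2$ and using the normalization identity $\left((\DD f)f,\,(n_x\cdot v)_+\right)_\p=\left((\DD f)^2,\,(n_x\cdot v)_+\right)_\p$ (which holds because $\DD f$ is $v$-independent and $c_w\int(n_x\cdot v)_+\dif\mu=1$) to convert cross terms into the stated squares. \textbf{The main obstacle} I anticipate is the bookkeeping in this last algebraic regrouping: one must repeatedly invoke the projection-type identity $\left((\DD f)g,\,(n_x\cdot v)_+\right)_\p=\left((\DD f)(\DD g),\,(n_x\cdot v)_+\right)_\p$ to collapse cross terms into perfect squares $(f-\DD f)^2$ and $(\DD f)^2$, and checking that the coefficients $(1-\beta^2)$, $(1-(\alpha+\beta)^2)$, and $(\alpha^2+2\alpha\beta)$ emerge consistently requires care, though each step is elementary.
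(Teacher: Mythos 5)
Your proposal is correct and follows essentially the same route as the paper: split the boundary integral into outgoing and incoming parts, substitute the boundary condition on $\Sigma_-$, push everything to $\Sigma_+$ by the measure-preserving specular change of variables, and collapse the cross terms via the projection identity $\left((\DD f)f,\,(n_x\cdot v)_+\right)_\p=\left((\DD f)^2,\,(n_x\cdot v)_+\right)_\p$. The only cosmetic difference is that the paper additionally observes that the odd moment $\left(v\cdot U\,\alpha\DD f,\,(n_x\cdot v)_\pm\right)_\p$ vanishes outright (since $U\DD f$ is $v$-independent and tangential), whereas you simply carry that term through the reflection; both yield the stated form of \eqref{identity1}, and the minor parity slip in your preliminary remark (iv) is corrected in the proof itself.
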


\begin{proof}
According to the assumption on $f$, 
\begin{align*}
\left(v\cdot Uf,\,n_x\cdot v\right)_\p
=\left(v\cdot Uf,\,(n_x\cdot v)_+\right)_\p -\left(v\cdot U \alpha\DD f,\,(n_x\cdot v)_-\right)_\p
-\left(v\cdot U\beta\RR f,\,(n_x\cdot v)_-\right)_\p. 
\end{align*}
On the one hand, since the vector field $U\alpha\DD f$ is independent of $v$ and tangent to $n_x$, we see that for any fixed $x\in\p\O$, the functions $v\mapsto v\cdot U \alpha\DD f\,(n_x\cdot v)_\pm$ are odd with respect to the tangential component of $v$ at $\p\O$. This implies that 
\begin{align*}
\left(v\cdot U \alpha\DD f,\,(n_x\cdot v)_\pm\right)_\p  =0. 
\end{align*}
On the other hand, by a change of variables $v\mapsto v-2(n_x\cdot v)\;\!n_x$ and the assumption on $U$, we have 
\begin{align*}
\left(v\cdot U\beta\RR f,\,(n_x\cdot v)_-\right)_\p 
&=\left(v\cdot U\beta f,\,(n_x\cdot v)_+\right)_\p
-\left(2(n_x\cdot v)n_x\cdot U\beta f,\,(n_x\cdot v)_+\right)_\p\\
&=\left(v\cdot U\beta f,\,(n_x\cdot v)_+\right)_\p. 
\end{align*}
The three identities above together establish \eqref{identity1}. 

As for \eqref{identity2}, we observe that for any bounded function $a_b$ defined on $\p\O$ (we will take $a_b$ to be either $1-\beta^2$ or $\alpha^2+2\alpha\beta$), we have 
\begin{align*}
(a_bf\DD f,\,(n_x\cdot v)_+)_\p=(a_b(\DD f)^2,\,(n_x\cdot v)_+)_\p,
\end{align*}
which implies that 
\begin{align*}
\left(a_b(f-\DD f)^2,\,(n_x\cdot v)_+\right)_\p 
=\left(a_bf^2,\,(n_x\cdot v)_+\right)_\p - \left(a_b(\DD f)^2,\,(n_x\cdot v)_+\right)_\p. 
\end{align*}
By definition and changes of variables, we have 
\begin{align*}
\left(f^2,\,n_x\cdot v\right)_\p 
= \left(f^2,\,(n_x\cdot v)_+\right)_\p -\left(\alpha^2(\DD f)^2,\,(n_x\cdot v)_-\right)_\p \\ 
-\left(2\alpha\beta\DD f\RR f,\,(n_x\cdot v)_-\right)_\p -\left(\beta^2(\RR f)^2,\,(n_x\cdot v)_-\right)_\p \\ 
= \left(f^2,\,(n_x\cdot v)_+\right)_\p -\left(\alpha^2(\DD f)^2,\,(n_x\cdot v)_+\right)_\p 
-\left(2\alpha\beta f\DD f,\,(n_x\cdot v)_+\right)_\p -\left(\beta^2 f^2,\,(n_x\cdot v)_+\right)_\p \\ 
=\left((1-\beta^2)f^2,\,(n_x\cdot v)_+\right)_\p -  \left((\alpha^2+2\alpha\beta)(\DD f)^2,\,(n_x\cdot v)_+\right)_\p&. 
\end{align*}
The proof of \eqref{identity2} is complete by gathering the above two identities. 
\end{proof}

\begin{lemma}\label{identity-e}
Let the functions $\alpha,\beta:\p\O\rightarrow[0,1]$ satisfy $\alpha+\beta\le1$, and the vector $U:\p\O\rightarrow\R^d$ lying in $L^2(\p\O)$ satisfy 
\begin{align*}
n_x\cdot U=0. 
\end{align*}
Suppose that the function
\begin{align*}
c_b:=(1-\alpha-\beta)/(\alpha+\beta)\in L^\infty(\p\O).
\end{align*} 
For any functions $\varrho\in L^2(\p\O)$ and $f:\Sigma\rightarrow\R$ such that $f=\alpha\DD f+\beta\RR f$ on $\Sigma_-$, we have 
\begin{align*}
\left|\left(v\cdot(U-c_b\varrho n_x) f,\,n_x\cdot v\right)_\p\right|
\lesssim \left(\|U-c_b\varrho n_x\|_\p + \|\sqrt{c_b}\varrho\|_\p\right) \left(f^2,\, n_x\cdot v\right)_\p^\frac{1}{2}. 
\end{align*}
\end{lemma}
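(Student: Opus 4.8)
The plan is to decompose the vector $W:=U-c_b\varrho n_x$, whose normal component is $n_x\cdot W=-c_b\varrho$, into its tangential part $U$ and its normal part $-c_b\varrho n_x$, and to estimate the two boundary integrals $\left(v\cdot U f,\,n_x\cdot v\right)_\p$ and $-\left(c_b\varrho(n_x\cdot v)f,\,n_x\cdot v\right)_\p$ separately. Since $U\perp n_x$, one has $|U|\le|W|$ pointwise and hence $\|U\|_\p\le\|U-c_b\varrho n_x\|_\p$, so the tangential piece will supply the first term on the right-hand side and the normal piece the second. All estimates funnel into $(f^2,\,n_x\cdot v)_\p^{1/2}$ through the nonnegative decomposition \eqref{identity2}, whose two summands dominate $((1-\beta^2)(f-\DD f)^2,(n_x\cdot v)_+)_\p$ and $((1-(\alpha+\beta)^2)(\DD f)^2,(n_x\cdot v)_+)_\p$. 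I will also use repeatedly the elementary observation that $c_b\in L^\infty(\p\O)$ forces $\alpha+\beta\ge(1+\|c_b\|_{L^\infty})^{-1}>0$, which yields the pointwise bounds $c_b\lesssim1-\beta^2$ and $c_b\lesssim1-(\alpha+\beta)^2$.

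For the tangential piece I would apply \eqref{identity1} and then split $(1-\beta)f-\alpha\DD f=(1-\beta)(f-\DD f)+(1-\alpha-\beta)\DD f$. The contribution of $\DD f$ vanishes, because $\DD f$ is independent of $v$ while $\int_{\R^d}v\,(n_x\cdot v)_+\dif\mu$ is parallel to $n_x$ and $U\perp n_x$. For the remaining term, Cauchy--Schwarz in the measure $(n_x\cdot v)_+e^{-\phi}\dif\sigma_x\dif\mu$, combined with $\int_{\R^d}|v|^2(n_x\cdot v)_+\dif\mu\lesssim1$ and the pointwise inequality $(1-\beta)^2\le1-\beta^2$, gives $\lesssim\|U\|_\p\,((1-\beta^2)(f-\DD f)^2,(n_x\cdot v)_+)_\p^{1/2}\le\|U\|_\p\,(f^2,n_x\cdot v)_\p^{1/2}$ by \eqref{identity2}.

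The normal piece is where the real work lies. First I would reduce the full velocity integral to the outgoing boundary: inserting $f=\alpha\DD f+\beta\RR f$ on the incoming set and applying the specular change of variables $v\mapsto v-2(n_x\cdot v)n_x$ exactly as in the proof of Lemma~\ref{identity}, together with $\int_{\R^d}(n_x\cdot v)^2\dif\mu=1$, I expect $\int_{\R^d}(n_x\cdot v)^2f\dif\mu$ to collapse to $(1+\beta)\int_{n_x\cdot v>0}(n_x\cdot v)^2(f-\DD f)\dif\mu+\tfrac{1+\alpha+\beta}{2}\DD f$. In both resulting terms I would write $c_b\varrho=\sqrt{c_b}\,(\sqrt{c_b}\varrho)$ and apply Cauchy--Schwarz on $\p\O$. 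The $\DD f$-term reduces to controlling $\int_{\p\O}c_b(\DD f)^2e^{-\phi}\dif\sigma_x$, which is $\lesssim((1-(\alpha+\beta)^2)(\DD f)^2,(n_x\cdot v)_+)_\p\le(f^2,n_x\cdot v)_\p$ via $c_b\lesssim1-(\alpha+\beta)^2$. For the fluctuation term, a Cauchy--Schwarz in $v$ using $\int_{n_x\cdot v>0}(n_x\cdot v)^3\dif\mu\lesssim1$ turns the inner integral into $\int_{n_x\cdot v>0}(n_x\cdot v)(f-\DD f)^2\dif\mu$, and then $c_b\lesssim1-\beta^2$ with \eqref{identity2} yields $\lesssim\|\sqrt{c_b}\varrho\|_\p\,(f^2,n_x\cdot v)_\p^{1/2}$. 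Adding the tangential and the two normal contributions proves the estimate.

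The main obstacle I anticipate is the normal piece: carrying out the incoming/outgoing reduction correctly (this is where the precise form of the Maxwell-type operator $\alpha\DD+\beta\RR$ enters) and, more subtly, converting every stray factor of $c_b$ into the nonnegative dissipation coefficients $1-\beta^2$ or $1-(\alpha+\beta)^2$ of \eqref{identity2}. This conversion is exactly where the hypothesis $c_b\in L^\infty(\p\O)$ is indispensable, and it is why the implicit constant in the estimate will depend on $\|c_b\|_{L^\infty(\p\O)}$ in addition to the parameters absorbed by $\lesssim$.
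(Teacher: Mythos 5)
Your tangential piece is handled exactly as in the paper (via \eqref{identity1}, the vanishing of the $\DD f$ contribution against $v\,(n_x\cdot v)_+$, and \eqref{identity2}), and your reduction of $\int_{\R^d}(n_x\cdot v)^2 f\dif\mu$ to the outgoing set is computed correctly. The genuine gap is the one you flag in your last sentence as if it were harmless: your constant depends on $\|c_b\|_{L^\infty(\p\O)}$, through the conversions $c_b\lesssim 1-\beta^2$ and $c_b\lesssim 1-(\alpha+\beta)^2$, which hold only with constant of order $1+\|c_b\|_{L^\infty}$. This is not an admissible dependence: the paper's convention for $\lesssim$ excludes $\|c_b\|_{L^\infty}$, and in Step~2 of the proof of Theorem~\ref{longtime} the lemma is applied after the regularization $\alpha+\beta\ge\iota$ (introduced precisely to make $c_b$ finite) with $\iota\rightarrow0$ at the end, so that $\|c_b\|_{L^\infty}\sim\iota^{-1}$ blows up and a constant depending on it would render \eqref{necessario} vacuous in the limit. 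Moreover, this is not a bookkeeping defect that a sharper estimate of your normal piece could repair: the normal piece alone genuinely fails the stated bound uniformly. Take $\alpha=0$ and $\beta$ a small constant, so $c_b=(1-\beta)/\beta$ and $(f^2,n_x\cdot v)_\p=((1-\beta^2)f^2,(n_x\cdot v)_+)_\p$; saturating Cauchy--Schwarz, the ratio of $\left|\left(c_b\varrho f,(n_x\cdot v)^2\right)_\p\right|$ to $\|\sqrt{c_b}\varrho\|_\p\left(f^2,n_x\cdot v\right)_\p^{1/2}$ is of order $\sqrt{(1+\beta)/\beta}\rightarrow\infty$. So the two pieces must not be estimated separately.

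The paper's proof keeps them together. Using \eqref{identity1} and the same change of variables you perform, it rewrites the whole bracket as
\begin{align*}
\left(v\cdot(U-c_b\varrho n_x)\big[(1-\beta)f-\alpha\DD f\big],\,(n_x\cdot v)_+\right)_\p
-2\left(c_b\varrho\,(\beta f+\alpha\DD f),\,(n_x\cdot v)_+^2\right)_\p,
\end{align*}
i.e.\ part of the normal contribution is absorbed into the first term, where it is paid for by $\|U-c_b\varrho n_x\|_\p$ with no stray factor of $c_b$. The leftover carries the combination $\beta f+\alpha\DD f$, whose coefficients sum to $\alpha+\beta$; hence $c_b(\beta f+\alpha\DD f)^2\le(1-\alpha-\beta)(\alpha+\beta)\big[f^2+(\DD f)^2\big]\le(1-(\alpha+\beta)^2)\big[f^2+(\DD f)^2\big]$, so the $(\alpha+\beta)^{-1}$ hidden in $c_b$ cancels exactly and \eqref{identity2} closes the estimate with a constant independent of $\|c_b\|_{L^\infty}$. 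Your decomposition destroys this cancellation because your normal piece pairs $c_b\varrho$ with $(1+\beta)f+\alpha\DD f$, whose coefficients are of order one rather than of order $\alpha+\beta$.
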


\begin{proof}
By the assumption on $f$ and a change of variables, 
\begin{align*}
\left(c_b\varrho f,\,(n_x\cdot v)^2\right)_\p=\left(c_b\varrho[(1+\beta)f+\alpha\DD f],\,(n_x\cdot v)_+^2\right)_\p.
\end{align*} 
In view of \eqref{identity1} of Lemma~\ref{identity}, we obtain 
\begin{align}\label{lemma-bdy1}
\begin{aligned}
\left(v\cdot(U-c_b\varrho n_x)f,\,n_x\cdot v\right)_\p
=\left(v\cdot U[(1-\beta)f-\alpha\DD f],\,(n_x\cdot v)_+\right)_\p
- \left(c_b\varrho f,\,(n_x\cdot v)^2\right)_\p \\
=\left(v\cdot(U-c_b\varrho n_x)[(1-\beta)f-\alpha\DD f],\,(n_x\cdot v)_+\right)_\p 
-\big(c_b\varrho(\beta f+\alpha\DD f),\,(n_x\cdot v)_+^2\big)_\p&. 
\end{aligned}
\end{align}
By using H\"older's inequality, we have 
\begin{align}\label{lemma-bdy2}
\begin{aligned}
\left|\left(v\cdot(U-c_b\varrho n_x)[(1-\beta)f-\alpha\DD f],\,(n_x\cdot v)_+\right)_\p\right| \\
\le \left(|v|^2|U-c_b\varrho n_x|^2,\,(n_x\cdot v)_+\right)_\p^\frac{1}{2} \left((1-\beta)^2(f-\DD f)^2 +(1-\alpha-\beta)^2(\DD f)^2,\, (n_x\cdot v)_+ \right)_\p^\frac{1}{2}&, 
\end{aligned}
\end{align}
and along with the fact that $c_b(\alpha^2+\beta^2)\le 1-\alpha-\beta$, we have 
\begin{align}\label{lemma-bdy3}
\begin{aligned}
\left|\left(c_b\varrho(\beta f+\alpha\DD f),\,(n_x\cdot v)_+^2\right)_\p\right| 
\le \left(c_b\varrho^2,\,(n_x\cdot v)_+^3\right)_\p^\frac{1}{2} \left(2c_b\big[\beta^2 f^2 +\alpha^2(\DD f)^2\big],\, (n_x\cdot v)_+ \right)_\p^\frac{1}{2}\\
\le 2\left(c_b\varrho^2,\,(n_x\cdot v)_+^3\right)_\p^\frac{1}{2} \left((1-\alpha-\beta)\big[f^2 +(\DD f)^2\big],\, (n_x\cdot v)_+ \right)_\p^\frac{1}{2}&. 
\end{aligned}
\end{align}
Recalling that the inner product $(\cdot,\cdot)_\p$ incorporates a Gaussian weight, and noting that the functions $U-c_b\varrho n_x$ and $\sqrt{c_b}\varrho$ are independent of $v$, we see that 
\begin{align}\label{lemma-bdy41}
\begin{aligned}
\left(|v|^2|U-c_b\varrho n_x|^2,\,(n_x\cdot v)_+\right)_\p^\frac{1}{2} \lesssim \|U-c_b\varrho n_x\|_\p, \\
\left(c_b\varrho^2,\,(n_x\cdot v)_+^3\right)_\p^\frac{1}{2} \lesssim \|\sqrt{c_b}\varrho\|_\p.
\end{aligned}
\end{align}
We thus conclude from \eqref{lemma-bdy1}, \eqref{lemma-bdy2}, \eqref{lemma-bdy3}, \eqref{lemma-bdy41} that
\begin{align}\label{lemma-bdy-f1}
\begin{aligned}
\left|\left(v\cdot(U-c_b\varrho n_x) g,\,n_x\cdot v\right)_\p\right|\\
\lesssim \|U-c_b\varrho n_x\|_\p \left((1-\beta)^2(f-\DD f)^2 +(1-\alpha-\beta)^2(\DD f)^2,\, (n_x\cdot v)_+ \right)_\p^\frac{1}{2} \\ 
+ \|\sqrt{c_b}\varrho\|_\p\left((1-\alpha-\beta)\big[f^2+(\DD f)^2\big],\,(n_x\cdot v)_+\right)_\p^\frac{1}{2} &. 
\end{aligned}
\end{align}
For $\alpha,\beta\ge0$ such that $\alpha+\beta\le1$, we have the following elementary inequalities, 
\begin{align*}
(1-\beta)^2\le1-\beta^2,\\
(1-\alpha-\beta)^2\le 1-\alpha-\beta \le1-(\alpha+\beta)^2. 
\end{align*}
Taking these facts into account and applying \eqref{identity2} of Lemma~\ref{identity}, we have  
\begin{align}\label{lemma-bdy-f2}
\begin{aligned}
\left((1-\beta)^2(f-\DD f)^2 +(1-\alpha-\beta)^2(\DD f)^2,\, (n_x\cdot v)_+ \right)_\p\\
\le \left((1-\beta^2)(f-\DD f)^2 +(1-(\alpha+\beta)^2)(\DD f)^2,\, (n_x\cdot v)_+ \right)_\p \\
= \left(f^2,\,n_x\cdot v\right)_\p&, 
\end{aligned}
\end{align}
and 
\begin{align}\label{lemma-bdy-f3}
\begin{aligned}
\left((1-\alpha-\beta)\big[f^2+(\DD f)^2\big],\,(n_x\cdot v)_+\right)_\p\\
\le \left((1-(\alpha+\beta)^2)\big[f^2+(\DD f)^2\big],\,(n_x\cdot v)_+\right)_\p\\
\le 2 \left(f^2,\,n_x\cdot v\right)_\p&. 
\end{aligned}
\end{align}
Gathering \eqref{lemma-bdy-f1}, \eqref{lemma-bdy-f2} and \eqref{lemma-bdy-f3}, we arrive at the desired result. 
\end{proof}

\subsection{Dissipation estimates} 
The dissipation property exhibited by the operator $\LL_i$, which acts only on the velocity variable, is formulated in the following lemma. 

\begin{lemma}\label{dissipation}
If $f_\ep$ is a solution to \eqref{rfp} associated with the initial data $\fein\in L^2(\O\times\R^d,\dif m)$ and the boundary condition $f_\ep=\alpha\DD f_\ep +\beta\RR f_\ep$ on $\R_+\times\Sigma_-$, then we have 
\begin{align}\label{dissipation-h}
\frac{\dif}{\dif t} \|f_\ep\|^2  
\le -\frac{2}{\ep^2}\|f_\ep^\perp\|^2- \frac{1}{\ep}\left(f_\ep^2,\, n_x\cdot v\right)_\p,  
\end{align}
with $\left(f_\ep^2,\, n_x\cdot v\right)_\p\ge0$. In particular, for any $t>0$, 
\begin{align}\label{dissipationR}
\|f_\ep(t)\|^2 +\frac{2}{\ep^2} \int_0^t \|f_\ep^\perp\|^2
+\frac{1}{\ep}\int_0^t\left(f_\ep^2,\,n_x\cdot v\right)_\p
\le \|\fein\|^2. 
\end{align}
\end{lemma}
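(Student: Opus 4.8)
The plan is to derive the energy identity by testing the Fokker-Planck or time-relaxation equation \eqref{rfp} against $f_\ep$ itself in the weighted $L^2(\dif m)$ inner product. Multiplying the equation by $f_\ep$ and integrating, the time-derivative term gives $\tfrac{\ep}{2}\tfrac{\dif}{\dif t}\|f_\ep\|^2$, while the transport term $(v\cdot\na_x-\na_x\phi\cdot\na_v)f_\ep$ is handled by the skew-adjointness-up-to-boundary formula recorded in Section~\ref{pre}: testing against $f_\ep$ turns $\left((v\cdot\na_x-\na_x\phi\cdot\na_v)f_\ep,\,f_\ep\right)$ into $\tfrac12\left(f_\ep^2,\,n_x\cdot v\right)_\p$. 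Thus the transport contributes purely a boundary term, with no interior part surviving. This is the decisive structural fact, since it isolates the boundary flux $\left(f_\ep^2,\,n_x\cdot v\right)_\p$ as the only trace that appears.

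Next I would treat the collision term on the right-hand side. For $\LL_2$ I use the integration-by-parts identity $\left(\psi_1,\,(\na_v-v)\psi_2\right)=-\left(\na_v\psi_1,\,\psi_2\right)$ stated in the preliminaries, so that $\left(\LL_2 f_\ep,\,f_\ep\right)=-\|\na_v f_\ep\|^2$, and then invoke the Gaussian Poincar\'e inequality $\|\na_v f_\ep\|_{L^2(\dif\mu)}^2\ge\|f_\ep^\perp\|_{L^2(\dif\mu)}^2$ in the velocity variable (integrated in $x$ against $e^{-\phi}\dif x$) to get $\left(\LL_2 f_\ep,\,f_\ep\right)\le-\|f_\ep^\perp\|^2$. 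For $\LL_1$ the operator is itself a $v$-projection, and a direct computation gives $\left(\LL_1 f_\ep,\,f_\ep\right)=-\|f_\ep^\perp\|^2$ exactly, since $\LL_1 f=\langle f\rangle-f=-f^\perp$. In either case the collision term is bounded above by $-\|f_\ep^\perp\|^2$. After dividing through by $\ep/2$ and using the prefactor $\ep^{-1}$ on the collision operator, these two contributions assemble into \eqref{dissipation-h}.

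The remaining point, and the one requiring the boundary lemma, is the sign of the flux term $\left(f_\ep^2,\,n_x\cdot v\right)_\p\ge0$. Here I would apply the second form of identity \eqref{identity2} from Lemma~\ref{identity}, which expresses this flux as
\begin{align*}
\left(f_\ep^2,\,n_x\cdot v\right)_\p = \left((\alpha^2+2\alpha\beta)(f_\ep-\DD f_\ep)^2,\,(n_x\cdot v)_+\right)_\p + \left((1-(\alpha+\beta)^2)f_\ep^2,\,(n_x\cdot v)_+\right)_\p.
\end{align*}
Under the standing hypotheses $\alpha,\beta\ge0$ and $\alpha+\beta\le1$, both coefficients $\alpha^2+2\alpha\beta$ and $1-(\alpha+\beta)^2$ are nonnegative, and $(n_x\cdot v)_+\ge0$, so each term is manifestly nonnegative; this yields the claimed nonnegativity and simultaneously shows that the boundary term can only help the dissipation. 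I expect the main subtlety to lie precisely in justifying this boundary computation rigorously — in particular in applying the trace and Green's formula, which the paper assumes valid for the weak solutions in question, so that all the integrations by parts above are legitimate.

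Finally, integrating \eqref{dissipation-h} in time over $[0,t]$ and using that both $\|f_\ep^\perp\|^2$ and the flux term are nonnegative produces \eqref{dissipationR}, completing the lemma. The whole argument is a single energy estimate; the only genuinely new ingredient beyond standard hypocoercivity bookkeeping is the boundary identity of Lemma~\ref{identity}, which converts the incoming-outgoing balance imposed by the boundary condition into a sign-definite flux.
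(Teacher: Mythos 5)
Your argument is correct and coincides with the paper's own proof: the same energy estimate testing against $f_\ep$ in $L^2(\dif m)$, the same treatment of $\LL_1$ directly and of $\LL_2$ via the Gaussian--Poincar\'e inequality (noting $\na_v f_\ep=\na_v f_\ep^\perp$), and the same appeal to identity \eqref{identity2} of Lemma~\ref{identity} for the sign of the boundary flux, followed by integration in time. No gaps.
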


\begin{proof}
A direct computation by integrating \eqref{rfp} against $\mu e^{-\phi}f_\ep$ yields that 
\begin{align*}
\frac{1}{2}\frac{\dif}{\dif t}\|f_\ep\|^2
&=\left(f_\ep,\,\p_t f_\ep\right) \\
&=\frac{1}{\ep^2} \left(\LL_if_\ep,f_\ep\right) 
-\frac{1}{2\ep} \left(f_\ep^2,\,n_x\cdot v\right)_\p\\
&\le -\frac{1}{\ep^2}\|f_\ep^\perp\|^2 -\frac{1}{2\ep} \left(f_\ep^2,\,n_x\cdot v\right)_\p. 
\end{align*}
Indeed, we used the definition of $\LL_i$ and the Gaussian-Poincar\'e inequality with $\la f_\ep^\perp\ra=0$ to see that 
\begin{align*}
\left(\LL_1f_\ep,f_\ep\right) &= -\|f_\ep^\perp\|^2, \\
\left(\LL_2f_\ep,f_\ep\right) &= -\|\na_vf_\ep^\perp\|^2 \le -\|f_\ep^\perp\|^2.
\end{align*}
Here we point out that $\left(f_\ep^2,\,n_x\cdot v\right)_\p\ge0$ due to \eqref{identity2} of Lemma~\ref{identity}. Integrating along time, we derive \eqref{dissipationR} as claimed. 
\end{proof}

The following lemma gives a weighted dissipation estimate for solutions to the homogeneous counterpart of \eqref{rfp}, which will be used in Section~\ref{diffusionlimit} to address the initial layer correction. It is worth noting that the variable $x$ involved in the lemma below serves as a parameter and therefore does not affect the relaxation process. 

\begin{lemma}\label{layer}
There exists some constant $c_0>0$ such that for any solution $\psi_\ep$ to
\begin{align}\label{layereq}
\left\{ 
\begin{aligned}
\ &\p_t\psi_\ep=\ep^{-2}\LL_i\psi_\ep  \quad {\rm in\ } \R_+\times\O\times\R^d, \\
\ &\psi_\ep|_{t=0}=\psin \quad {\rm in\ } \O\times\R^d, \\
\end{aligned}
\right. 
\end{align} 
with the initial data $\psin$ satisfying $(1+|v|)\psin\in L^2(\R^d,\dif\mu)$ and $\la\psin\ra=0$, we have 
\begin{align}
\|\psi_\ep(t)\|^2 +\frac{2}{\ep^2} \int_0^t \|\psi_\ep\|^2 \le \|\psin\|^2, \label{layerdecay1} \\	
\|(1+|v|)\psi_\ep(t)\| \lesssim e^{-c_0t/\ep^2}\|(1+|v|)\psin\|. \label{layerdecay2}
\end{align}
\end{lemma}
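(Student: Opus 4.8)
The plan is to exploit that in \eqref{layereq} the variable $x$ enters only as a parameter, so that $\LL_i$ acts solely on $v$ and every manipulation reduces to a Gaussian integration in $v$ (against $\dif\mu$), the integration against $e^{-\phi}\dif x$ merely coming along for the ride. The first observation is that the macroscopic density is conserved: since $\la\LL_i\psi\ra=0$ for both operators (for $\LL_2$ this is the adjointness $(\psi_1,(\na_v-v)\psi_2)=-(\na_v\psi_1,\psi_2)$ with $\psi_1=1$), one has $\p_t\la\psi_\ep\ra=\ep^{-2}\la\LL_i\psi_\ep\ra=0$, hence $\la\psi_\ep(t)\ra\equiv\la\psin\ra=0$ and $\psi_\ep=\psi_\ep^\perp$ for all $t$.

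For \eqref{layerdecay1} I would run the energy computation of Lemma~\ref{dissipation}, noting that \eqref{layereq} carries no transport term and therefore produces no boundary contribution. Testing the equation against $\psi_\ep\dif m$ and using $(\LL_1\psi_\ep,\psi_\ep)=-\|\psi_\ep^\perp\|^2$ and $(\LL_2\psi_\ep,\psi_\ep)=-\|\na_v\psi_\ep^\perp\|^2\le-\|\psi_\ep^\perp\|^2$ together with $\psi_\ep=\psi_\ep^\perp$ gives $\tfrac{\dif}{\dif t}\|\psi_\ep\|^2\le-2\ep^{-2}\|\psi_\ep\|^2$. Integrating in time yields \eqref{layerdecay1}, and in particular the pointwise decay $\|\psi_\ep(t)\|^2\le e^{-2t/\ep^2}\|\psin\|^2$, which will feed the next step.

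The weighted bound \eqref{layerdecay2} is the crux. For $\LL_1$ it is immediate: since $\la\psi_\ep\ra=0$ one has $\LL_1\psi_\ep=-\psi_\ep$, so $\psi_\ep(t)=e^{-t/\ep^2}\psin$ and the estimate holds with $c_0=1$. For $\LL_2$ I would instead test against $|v|^2\psi_\ep\dif m$ and integrate by parts in $v$ via $(\psi_1,(\na_v-v)\psi_2)=-(\na_v\psi_1,\psi_2)$ with $\psi_1=|v|^2\psi_\ep$. Using $\na_v(|v|^2\psi_\ep)=2v\psi_\ep+|v|^2\na_v\psi_\ep$ together with the Gaussian identity $\int_{\R^d} v\cdot\na_v(\psi_\ep^2)\dif\mu=\int_{\R^d}(|v|^2-d)\psi_\ep^2\dif\mu$ and integrating against $e^{-\phi}\dif x$ produces
\begin{align*}
\tfrac{\dif}{\dif t}\||v|\psi_\ep\|^2=-2\ep^{-2}\||v|\na_v\psi_\ep\|^2-2\ep^{-2}\||v|\psi_\ep\|^2+2d\ep^{-2}\|\psi_\ep\|^2\le-2\ep^{-2}\||v|\psi_\ep\|^2+2d\ep^{-2}\|\psi_\ep\|^2,
\end{align*}
after discarding the nonpositive weighted-gradient term. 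Inserting the decay $\|\psi_\ep(t)\|^2\le e^{-2t/\ep^2}\|\psin\|^2$ from the previous step and solving this scalar linear inequality with the integrating factor $e^{2t/\ep^2}$ gives $\||v|\psi_\ep(t)\|^2\le e^{-2t/\ep^2}\||v|\psin\|^2+2d(t/\ep^2)e^{-2t/\ep^2}\|\psin\|^2$; the elementary bound $(t/\ep^2)e^{-2t/\ep^2}\le e^{-1}e^{-t/\ep^2}$ then yields $\||v|\psi_\ep(t)\|^2\lesssim e^{-t/\ep^2}(\||v|\psin\|^2+\|\psin\|^2)$. Combining with \eqref{layerdecay1} and the pointwise bound $(1+|v|)^2\le2(1+|v|^2)$ gives \eqref{layerdecay2} with, say, $c_0=1/2$.

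The main obstacle, and the reason I avoid trying to establish a weighted Poincaré inequality, is the indefinite term $d\|\psi_\ep\|^2-\||v|\psi_\ep\|^2$ generated by the commutator of the weight $|v|^2$ with $\LL_2$: its sign is unfavourable for small velocities, so it cannot be absorbed into a spectral-gap term intrinsically. Treating the bad part $d\|\psi_\ep\|^2$ as an exponentially small forcing through the already-proven unweighted decay, and closing a scalar Gr\"onwall estimate, is what makes the argument work. A secondary technical point is the legitimacy of the integration by parts against the unbounded weight $|v|^2$ and the finiteness of $\||v|\psi_\ep(t)\|$; I would justify these by first carrying out the computation with the truncated weights $\min(|v|^2,N)$, whose commutators with $\LL_2$ are bounded, deriving bounds uniform in $N$, and letting $N\to\infty$ using the hypothesis $(1+|v|)\psin\in L^2(\R^d,\dif\mu)$.
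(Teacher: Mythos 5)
Your proof is correct and follows essentially the same route as the paper: both arguments establish \eqref{layerdecay1} from the unweighted energy identity (using $\la\psi_\ep\ra\equiv0$ and the Gaussian--Poincar\'e inequality), and both obtain \eqref{layerdecay2} by additionally testing the equation against $|v|^2\psi_\ep\,\dif m$. The only (harmless) difference is in how the Fokker--Planck case is closed: you keep the exact identity, whose favourable term $-2\ep^{-2}\||v|\psi_\ep\|^2$ lets you treat $2d\ep^{-2}\|\psi_\ep\|^2$ as an exponentially decaying forcing and conclude by an integrating factor, whereas the paper bounds the right-hand side by $2\ep^{-2}\|\psi_\ep\|^2$ via Cauchy--Schwarz and instead runs Gr\"onwall on a linear combination of $\|\psi_\ep\|^2$ and $\|v\psi_\ep\|^2$ using the weighted inequality $\|v\psi_\ep\|\lesssim\|\na_v\psi_\ep\|+\|\psi_\ep\|$.
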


\begin{proof}
We first observe that $\la\psi_\ep(t)\ra=0$ for any $t\ge0$. One is thus able to arrive at \eqref{layerdecay1} in the same way as the derivation of \eqref{dissipationR} in Lemma~\ref{dissipation}. 

In order to show the weighted estimate \eqref{layerdecay2} for solutions to \eqref{layereq} with $i=1$ (the time-relaxation case), we integrate \eqref{layereq} against $\mu e^{-\phi}\psi_\ep$ and $\mu e^{-\phi}|v|^2\psi_\ep$ so that 
\begin{align*}
\frac{\dif}{\dif t}\|\psi_\ep\|^2= -\frac{2}{\ep^2}\|\psi_\ep\|^2,\\
\frac{\dif}{\dif t}\|v\psi_\ep\|^2 = -\frac{2}{\ep^2}\|v\psi_\ep\|^2, 
\end{align*}
which then implies \eqref{layerdecay2} from Gr\"onwall's inequality directly. 

As for \eqref{layereq} with $i=2$ (the Fokker-Planck case), we apply integration by parts, Gaussian-Poincar\'e inequality and Cauchy-Schwarz inequality so that 
\begin{align}\label{re-psi1}
\begin{aligned}
\frac{\dif}{\dif t}\|\psi_\ep\|^2
=-\frac{2}{\ep^2}\|\na_v\psi_\ep\|^2
\le -\frac{1}{\ep^2}\left(\|\psi_\ep\|^2+\|\na_v\psi_\ep\|^2\right),
\end{aligned}
\end{align}
and
\begin{align}\label{re-psi2}
\begin{aligned}
\frac{\dif}{\dif t}\|v\psi_\ep\|^2
=-\frac{2}{\ep^2}\int_{\O\times\R^d}\left(|v|^2|\na_v\psi_\ep|^2+2\psi_\ep v\cdot\na_v\psi_\ep\right)\dif m
\le \frac{2}{\ep^2}\|\psi_\ep\|^2.  
\end{aligned}
\end{align}
Recall that for any $\psi\in L^2(\R^d,\dif\mu)$ such that $\na_v\psi\in L^2(\R^d,\dif\mu)$, we have 
\begin{align*}
\int_{\R^d}|v\psi|^2\dif\mu
&=-2\int_{\R^d}\psi^2\,v\cdot\na_v\mu\dif v-\int_{\R^d}|v\psi|^2\dif\mu\\
&=2d\int_{\R^d}\psi^2\dif\mu + 4\int_{\R^d}\psi\,v\cdot\na_v\psi\dif\mu-\int_{\R^d}|v\psi|^2\dif\mu\\
&\le 2d\int_{\R^d}\psi^2\dif\mu + 4\int_{\R^d}|\na_v\psi|^2\dif\mu, 
\end{align*}
where we used the integration by parts and the Cauchy-Schwarz inequality. 
We thus obtain 
\begin{align*}
\|v\psi_\ep\|\lesssim\|\na_v\psi_\ep\|+\|\psi_\ep\|. 
\end{align*}
Combining this with \eqref{re-psi1}, \eqref{re-psi2} and Gr\"onwall's inequality, we then derive \eqref{layerdecay2} as claimed. 
\end{proof}

\section{Long-time asymptotics}\label{hypocoercive} 
This section is devoted to the proof of Theorem~\ref{longtime}. We consider dissipative kinetic equations in bounded spatial domains with general boundary conditions that may not conserve mass, and discuss its hypocoercive property concerning the (uniform-in-$\ep$) long-time behaviour of the solutions. We recall the initial mass $M_0:=\left(\int_\O e^{-\phi}\dif x\right)^{-1} \int_{\O\times\R^d} \fein\dif m$. It can be readily verified that the mass conservation of \eqref{rfp} holds when $\alpha+\beta=1$. 

Let us turn to the proof of Theorem~\ref{longtime}. 

\begin{proof}[Proof of Theorem~\ref{longtime}]
The proof will proceed in three steps. 
	
\medskip\noindent\textit{Step 1. Elliptic problem associated with macroscopic quantities. }\\	
We consider the solution $u$ to the following elliptic equation with the Robin boundary condition, 
\begin{align}\label{elliptic-u}
\left\{ 
\begin{aligned}
\ & u-\Delta_x u+ \na_x\phi\cdot\na_xu=\la f_\ep\ra-M_c {\quad\rm in\ }\O, \\
\ & n_x\cdot\na_xu +c_bu =0 {\quad\rm on\ }\p\O. \\
\end{aligned}
\right. 
\end{align} 
Here we set $c_b:=(1-\alpha-\beta)/(\alpha+\beta)$ (we may always assume $\alpha+\beta\ge\iota>0$ so that $c_b$ is bounded, and eventually let $\iota\rightarrow0$); we take $M_c:=0$ if $c_b|_{\p\O}>0$, and $M_c:=M_0$ if $c_b|_{\p\O}=0$. In particular, when $c_b|_{\p\O}=0$, that is, $\alpha+\beta=1$ on $\p\O$, we deduce from \eqref{rfp} that the mass conservation $\left(\la f_\ep\ra-M_c,\,1\right)=0$ is satisfied. 
Let us consider the quantity 
\begin{align*}
\mathcal{A}:=\left(\la f_\ep\ra-M_c-u,\,\la f_\ep\ra-M_c\right).  
\end{align*}
By \eqref{elliptic-z} of Lemma~\ref{elliptic-est} (see Appendix~\ref{append} below), we obtain 
\begin{align}\label{AA}
\|\la f_\ep\ra-M_c\| + \|u\| + \|\na_xu\| + \|D_x^2u\| + \|\sqrt{c_b}u\|_\p \lesssim \mathcal{A}^\frac{1}{2}. 
\end{align} 
Since $\la vf_\ep\ra$=$\la vf_\ep^\perp\ra$, the local conservation law \eqref{hydroFP1} can be written as  
\begin{align*}
\ep\p_t\la f_\ep\ra+\na_x\cdot\la vf_\ep^\perp\ra-\na_x\phi\cdot\la vf_\ep^\perp\ra=0. 
\end{align*}
Combining this relation with \eqref{elliptic-u} and applying integration by parts, we have
\begin{align}\label{elliptic1}
\begin{aligned}
\|\p_tu\|^2 +\|\na_x\p_tu\|^2 + \|\sqrt{c_b}\p_tu\|_\p^2
&= \left( \p_tu,\,\p_t(u-\Delta_xu+\na_x\phi\cdot\na_xu)\right)\\
&= \left( \p_tu,\,\p_t\la f_\ep\ra\right) \\
&= \ep^{-1}\left( \p_tu,\,(\na_x\phi-\na_x)\cdot\la vf_\ep^\perp\ra\right) \\
&= \ep^{-1}\left( \na_x\p_tu,\,\la vf_\ep^\perp\ra\right) - \ep^{-1}\left( \p_tu,\,n_x\cdot\la vf_\ep^\perp\ra\right)_\p. 
\end{aligned} 
\end{align} 
By H\"older's inequality, 
\begin{align}\label{elliptic2}
\begin{aligned}
\left|\left(\na_x\p_tu,\,\la vf_\ep^\perp\ra\right)\right| 
&\le \|v\cdot\na_x\p_tu\| \;\! \|f_\ep^\perp\| \\
&= \|\na_x\p_tu\| \;\! \|f_\ep^\perp\|. 
\end{aligned} 
\end{align} 
Based on the boundary condition of $f_\ep$, the boundary term above can be recast as
\begin{align*}
\left(\p_tu,\,n_x\cdot\la vf_\ep^\perp\ra\right)_\p = \left( \p_tu\;\!(1-\alpha-\beta)(f_\ep-M_c),\,(n_x\cdot v)_+\right)_\p. 
\end{align*} 
By using H\"older's inequality, the trace inequality that
\begin{align*}
\|\p_tu\|_{L^2(\p\O)}\lesssim \|\p_tu\|_{H_x^1(\O)}, 
\end{align*} 
and \eqref{identity2} of Lemma~\ref{identity} with $f:=f_\ep-M_c$, we have 
\begin{align}\label{elliptic3}
\begin{split}
\left|\left( \p_tu,\,n_x\cdot\la vf_\ep^\perp\ra\right)_\p\right| 
&\lesssim \|\p_tu\|_{L^2(\p\O)} \left((1-\alpha-\beta)^2(f_\ep-M_c)^2,\,(n_x\cdot v)_+\right)_\p^\frac{1}{2}\\
&\lesssim \left(\|\p_tu\|  +\|\na_x\p_tu\|\right) \left((f_\ep-M_c)^2,\,n_x\cdot v\right)_\p^\frac{1}{2}. 
\end{split}
\end{align}
It follows from \eqref{elliptic1}, \eqref{elliptic2}, \eqref{elliptic3} and H\"older's inequality that 
\begin{align}\label{Poisson2} 
\|\p_tu\| +\|\na_x\p_tu\| \lesssim \ep^{-1}\|f_\ep^\perp\|  + \ep^{-1}\left((f_\ep-M_c)^2,\,n_x\cdot  v\right)_\p^\frac{1}{2}. 
\end{align}	
	
\medskip\noindent\textit{Step 2. Macro-micro decomposition. }\\	
Taking the inner product with $v\cdot\na_xu(t,x)$ on both sides of \eqref{rfp} yields that 
\begin{align*}
\ep\frac{\dif}{\dif t} \left( v\cdot\na_xu,\, f_\ep-M_c \right) 
=&\ \ep \left (v\cdot\na_x\p_tu,\, f_\ep-M_c \right) 
+\ep^{-1} \left( \LL_i v\cdot\na_xu,\, f_\ep-M_c \right) \\
&+\left( (v\cdot\na_x-\na_x\phi\cdot\na_v)(v\cdot\na_xu),\, f_\ep-M_c \right)  +\Sigma_\gamma, 
\end{align*}
where the boundary term 
\begin{align*}
\Sigma_\gamma:=-\left((v\cdot\na_xu)(f_\ep-M_c),\, n_x\cdot v \right)_\p. 
\end{align*} 
Taking into account the macro-micro decomposition, given as 
\begin{align*}
f_\ep-M_c=\la f_\ep\ra-M_c+f_\ep^\perp, 
\end{align*}
we obtain
\begin{align*}
\ep\frac{\dif}{\dif t} \left( v\cdot\na_xu,\, f_\ep^\perp \right) 
=&\ \ep\left( v\cdot\na_x\p_tu,\, f_\ep^\perp \right) 
-\ep^{-1}\left( v\cdot\na_xu,\, f_\ep^\perp \right) \\
&+ \left( \Delta_x u- \na_x\phi\cdot\na_xu,\, \la f_\ep\ra-M_c \right) \\
&+ \left( (v\otimes v):D^2_xu-\na_x\phi\cdot\na_xu ,\, f_\ep^\perp \right)  +\Sigma_\gamma,  
\end{align*}
where we used the facts that $\LL_iv=-v$ for $i\in\{1,2\}$ and $\la v_jv_k\ra=\delta_{jk}$ for $j,k\in\{1,\ldots,d\}$. It then turns out from the definition of $u$ and H\"older's inequality that 
\begin{align*}
\mathcal{A}+ \ep\frac{\dif}{\dif t}\left( v\cdot\na_xu,\, f_\ep^\perp \right)  
\lesssim  \left( \|D^2_xu\|  + \ep\|\na_x\p_tu\|  +\ep^{-1}\|\na_xu\| \right)\|f_\ep^\perp\| + |\Sigma_\gamma|. 
\end{align*}
By applying Lemma~\ref{identity-e} with $U:=\na_x u+c_bun_x$ and $f:=f_\ep-M_c$, as well as the trace inequality that
\begin{align*}
\|\na_xu\|_{L^2(\p\O)}\lesssim\|u\|_{H_x^2(\O)}, 
\end{align*}
we have 
\begin{align*}
|\Sigma_\gamma| 
\lesssim \left(\|u\|_{H_x^2(\O)} + \|\sqrt{c_b}u\|_{L^2(\p\O)}\right) \left((f_\ep-M_c)^2,\, n_x\cdot v \right)_\p^\frac{1}{2}, 
\end{align*}
which then implies that 
\begin{align*}
\mathcal{A}+ \ep\frac{\dif}{\dif t}\left( v\cdot\na_xu,\, f_\ep^\perp \right)  
\lesssim \,& \left( \|D^2_xu\|  + \ep\|\na_x\p_tu\|  +\ep^{-1}\|\na_xu\| \right)\|f_\ep^\perp\| \\
&+ \left(\|u\|_{H_x^2(\O)} + \|\sqrt{c_b}u\|_{L^2(\p\O)}\right) \left((f_\ep-M_c)^2,\, n_x\cdot v \right)_\p^\frac{1}{2}. 
\end{align*}
Combining this with \eqref{AA} and \eqref{Poisson2}, we arrive at 
\begin{align*}
\mathcal{A}+ \ep \frac{\dif}{\dif t}\left(v\cdot\na_xu,\, f_\ep^\perp\right) 
\lesssim&\ \|f_\ep^\perp\| \left(\ep^{-1}\mathcal{A}^\frac{1}{2} + \|f_\ep^\perp\|  +\left((f_\ep-M_c)^2,\,n_x\cdot v\right)_\p^\frac{1}{2}\right)\\
&\ +\mathcal{A}^\frac{1}{2}\left((f_\ep-M_c)^2,\, n_x\cdot v \right)_\p^\frac{1}{2}. 
\end{align*}
Applying the Cauchy-Schwarz inequality and utilizing \eqref{AA} again, we know that for some constant $c_*>0$, 
\begin{align}\label{necessario}
c_*\|\la f_\ep\ra-M_c\|^2 + \ep \frac{\dif}{\dif t} \left(v\cdot\na_xu,\, f_\ep^\perp\right)  
\lesssim \ep^{-2}\|f_\ep^\perp\|^2 + \left((f_\ep-M_c)^2,\, n_x\cdot v \right)_\p. 
\end{align}

\medskip\noindent\textit{Step 3. Construction of the modified entropy.}\\
Let $\kappa>0$ be a constant. We define the modified entropy as follows, 
\begin{align*}
E_\ep:=\|f_\ep-M_c\|^2 + \kappa \ep\left(v\cdot\na_xu,\, f_\ep^\perp\right) .
\end{align*}
If the constant $\kappa>0$ is sufficiently small, the modified entropy $E_\ep$ is equivalent to $\|f_\ep-M_c\|^2$ independent of $\ep$; indeed, by H\"older's inequality and \eqref{elliptic-regularity} of Lemma~\ref{elliptic-est}, we have 
\begin{align*}
\left|\left(v\cdot\na_xu,\, f_\ep^\perp\right) \right|
&\lesssim \|\la f_\ep\ra-M_c\| \;\! \|f_\ep^\perp\| \\
&\le \|f_\ep-M_c\|^2. 
\end{align*}
In light of this fact, we conclude from \eqref{dissipation-h} of Lemma~\ref{dissipation} (with $f_\ep$ replaced by $f_\ep-M_c$) and \eqref{necessario} that 
\begin{align*}
\frac{\dif}{\dif t} E_\ep
&\lesssim -\ep^{-2}\|f_\ep^\perp\|^2-\|\la f_\ep\ra-M_c\|^2\\
&\le -\|f_\ep-M_c\|^2\\ 
&\lesssim - E_\ep. 
\end{align*}
By using Gr\"onwall's inequality and leveraging the equivalence between $E_\ep$ and $\|f_\ep-M_c\|^2$, we arrive at the desired result. 
\end{proof}

\section{Diffusion asymptotics}\label{diffusionlimit}
This section is dedicated to proving Theorem~\ref{asympto}, which establishes the diffusion asymptotics of the kinetic model \eqref{rfp}. Throughout this section, we assume $\alpha+\beta=1$ when referring to \eqref{rfp}, specifically associated with the Maxwell boundary condition $f_\ep=\alpha\DD f_\ep+(1-\alpha)\RR f_\ep$ on $\R_+\times\Sigma_-$. 

\begin{proposition}\label{asympto0}
Let $\ep\in(0,1]$, $\fein\in L^2(\O\times\R^d,\dif m)$, and $f_\ep$ be the solution to \eqref{rfp} associated with the initial data $\fein$ and the Maxwell boundary condition. Suppose that $\rho$ is the solution to \eqref{limiteq} associated with the initial data $\rin:=\la\fein\ra$, and $\psi_\ep$ is the solution to \eqref{layereq} associated with the initial data $\psin:=\fein-\la\fein\ra$. Then we have the strong convergence that 
\begin{align}\label{asympto1}
\|f_\ep-\rho-\psi_\ep\|_{L^\infty(\R_+;\;\!L^2(\O\times\R^d,\;\!\dif m))}  \rightarrow 0 {\quad\rm as\ }\ep\rightarrow0.  
\end{align}  
If $\fein\in L_v^2H_x^2(\O\times\R^d,\dif m)$ satisfies $|v|\na_x\fein\in L^2(\O\times\R^d,\dif m)$ and the compatibility condition that $\fein$ depends only on the variable $x$ in $\p\O\times\R^d$, then we have 
\begin{align}\label{asympto2}
\|f_\ep-\rho-\psi_\ep\|_{L^\infty(\R_+;\;\!L^2(\O\times\R^d,\;\!\dif m))} 
\lesssim \sqrt{\ep} \left(\|\fein\| +\|(1+|v|)\na_x\fein\| +\|D_x^2\fein\| \right). 
\end{align}
If the initial data is well-prepared, that is, $\fein=\la\fein\ra\in H_x^2(\O)$, then we have 
\begin{align}\label{asympto3}
\|f_\ep-\rho\|_{L^\infty(\R_+;\;\!L^2(\O\times\R^d,\;\!\dif m))} 
\lesssim \sqrt{\ep} \|\fein\|_{H_x^2(\O)}. 
\end{align}
\end{proposition}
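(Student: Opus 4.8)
\emph{Approximate solution and reduction.} The plan is to compare $f_\ep$ with a corrected macroscopic profile carrying an initial layer. I would set
\begin{align*}
g_\ep:=\rho-\ep\, v\cdot\na_x\rho+\psi_\ep,\qquad r_\ep:=f_\ep-g_\ep,
\end{align*}
the correction $-\ep\,v\cdot\na_x\rho$ being the first Chapman--Enskog term (it solves $\LL_i(-v\cdot\na_x\rho)=v\cdot\na_x\rho$ since $\LL_iv=-v$). Plugging $g_\ep$ into \eqref{rfp}, using $\la v_jv_k\ra=\delta_{jk}$ and the limit equation \eqref{limiteq}, the orders $\ep^{-1}$ and $\ep^0$ cancel and $r_\ep$ solves $\ep\p_tr_\ep+(v\cdot\na_x-\na_x\phi\cdot\na_v)r_\ep-\ep^{-1}\LL_ir_\ep=-\mathcal{R}_\ep$ with
\begin{align*}
\mathcal{R}_\ep:=\ep\,({\rm I}-v\otimes v):D_x^2\rho-\ep^2\,v\cdot\na_x\p_t\rho+(v\cdot\na_x-\na_x\phi\cdot\na_v)\psi_\ep.
\end{align*}
A direct computation gives $r_\ep|_{t=0}=\ep\,v\cdot\na_x\rin$. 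Since the first two pieces of $\mathcal{R}_\ep$ are $O(\ep)$ with vanishing $v$-average (so they pair only against $r_\ep^\perp$), and since $\|\ep\,v\cdot\na_x\rho\|\lesssim\ep\|\na_x\rho\|$ uniformly in $t$, it suffices to prove $\|r_\ep\|_{L^\infty(\R_+;L^2)}\lesssim\sqrt\ep\,(\ldots)$; then \eqref{asympto2} follows, and \eqref{asympto3} is the special case $\psin=0$, $\psi_\ep\equiv0$ (for which the compatibility condition holds trivially).

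\emph{Energy estimate: bulk and layer terms.} Testing the equation for $r_\ep$ against $\ep^{-1}r_\ep\dif m$ exactly as in Lemma~\ref{dissipation} yields
\begin{align*}
\frac12\frac{\dif}{\dif t}\|r_\ep\|^2+\frac1{2\ep}\left(r_\ep^2,\,n_x\cdot v\right)_\p+\ep^{-2}\|r_\ep^\perp\|^2\le\left|\ep^{-1}\left(\mathcal{R}_\ep,\,r_\ep\right)\right|.
\end{align*}
The first two terms of $\mathcal{R}_\ep$ are absorbed by Young's inequality, a small multiple of $\ep^{-2}\|r_\ep^\perp\|^2$ going into the dissipation and leaving sources of size $\ep^2\int_0^\infty\|D_x^2\rho\|^2$ and $\ep^4\int_0^\infty\|v\cdot\na_x\p_t\rho\|^2$, both finite and $O(\ep^2)$ by the parabolic regularity and exponential decay of $\rho$. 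For the layer term I would use that $x$ enters \eqref{layereq} only as a parameter, so $\na_x\psi_\ep$ solves the same equation; Lemma~\ref{layer} then gives $\|(1+|v|)\na_x\psi_\ep\|\lesssim e^{-c_0t/\ep^2}\|(1+|v|)\na_x\psin\|$, whence $\ep^{-1}\int_0^\infty\|(v\cdot\na_x-\na_x\phi\cdot\na_v)\psi_\ep\|\,\|r_\ep\|$ also contributes only $O(\ep^2)$ after Young's inequality.

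\emph{The boundary term and the $\sqrt\ep$ rate.} The crux is $\frac1{2\ep}(r_\ep^2,n_x\cdot v)_\p$. Both $\rho$ (being $v$-independent with $n_x\cdot\na_x\rho=0$) and $\psi_\ep$ (which vanishes on $\p\O$ under the compatibility hypothesis) satisfy the Maxwell condition; but the correction does not, because on $\p\O$ one has $\DD(v\cdot\na_x\rho)=0$ and $\RR(v\cdot\na_x\rho)=v\cdot\na_x\rho$, so $\alpha\DD+(1-\alpha)\RR$ leaves a tangential defect. Consequently $r_\ep$ satisfies the \emph{inhomogeneous} condition $r_\ep=\alpha\DD r_\ep+(1-\alpha)\RR r_\ep-b_\ep$ on $\Sigma_-$ with $b_\ep=-\ep\alpha\,v\cdot\na_x\rho$. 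Feeding this into the identities of Lemma~\ref{identity} turns $(r_\ep^2,n_x\cdot v)_\p$ into a nonnegative quadratic in $r_\ep$ plus cross terms controlled, via Lemma~\ref{identity-e} and Young's inequality, by $\frac1\ep(b_\ep^2,(n_x\cdot v)_+)_\p\lesssim\ep\|\na_x\rho\|_{L^2(\p\O)}^2$. A trace inequality together with the space-time bound $\int_0^\infty\|\na_x\rho\|_{L^2(\p\O)}^2\lesssim\|\rin\|_{H_x^2}^2$ makes this boundary source integrate to $O(\ep)$; this is precisely the term that forces $\|r_\ep\|\lesssim\sqrt\ep$ rather than $O(\ep)$. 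Collecting everything, integrating in time, and recalling $\|\rin\|_{H_x^2}\lesssim\|\fein\|+\|(1+|v|)\na_x\fein\|+\|D_x^2\fein\|$ gives \eqref{asympto2}.

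\emph{The qualitative limit by density.} For \eqref{asympto1} with merely $\fein\in L^2$, I would approximate $\fein$ in $L^2(\O\times\R^d,\dif m)$ by $\fein^{(n)}\in C_c^\infty(\O\times\R^d)$, which vanish near $\p\O$ and hence meet the regularity and compatibility hypotheses. For each fixed $n$, \eqref{asympto2} gives $\|f_\ep^{(n)}-\rho^{(n)}-\psi_\ep^{(n)}\|\to0$ as $\ep\to0$, while the $L^2$-contractions of the three flows---Lemma~\ref{dissipation} for $f_\ep$, the energy estimate for \eqref{limiteq}, and \eqref{layerdecay1} of Lemma~\ref{layer} for $\psi_\ep$---bound $f_\ep-f_\ep^{(n)}$, $\rho-\rho^{(n)}$ and $\psi_\ep-\psi_\ep^{(n)}$ uniformly in $\ep$ and $t$ by $\|\fein-\fein^{(n)}\|$. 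Letting $\ep\to0$ and then $n\to\infty$ closes the argument. I expect the boundary analysis to be the main obstacle: it requires the precise algebraic identities of Lemma~\ref{identity} to preserve the sign of the boundary term, and it is where the loss to the $\sqrt\ep$ rate is unavoidable.
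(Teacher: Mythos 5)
Your route is genuinely different from the paper's. You construct the first-order Hilbert/Chapman--Enskog approximation $\rho-\ep\, v\cdot\na_x\rho+\psi_\ep$ and run a single energy estimate on the remainder $r_\ep$, whereas the paper computes the relative entropy $\tfrac{\dif}{\dif t}\|f_\ep-\psi_\ep-\rho\|^2$ directly and removes the singular flux term $\left(\ep^{-1}\la vf_\ep\ra+\na_x\la f_\ep\ra,\,\na_x\rho\right)$ by substituting the moment equation \eqref{hydroFP2} and integrating by parts in time. The two devices are equivalent in effect: your corrector $-\ep\, v\cdot\na_x\rho$ produces inside $\left(r_\ep^2,\,n_x\cdot v\right)_\p$ exactly the cross term $2\ep\left((v\cdot\na_x\rho)f_\ep,\,n_x\cdot v\right)_\p$ (the $v$-independent parts of the ansatz contribute nothing, since the Maxwell condition forces pointwise zero mass flux, and $\left((v\cdot\na_x\rho)^2,\,n_x\cdot v\right)_\p=0$ by parity because $\na_x\rho$ is tangential), and this is precisely the term the paper controls with Lemma~\ref{identity-e} applied to $f_\ep$, losing $\sqrt\ep$ for the same reason in both arguments. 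Since Lemmas~\ref{identity} and \ref{identity-e} are stated for the homogeneous boundary condition, it is cleaner to expand $\left(r_\ep^2,\,n_x\cdot v\right)_\p=\left(f_\ep^2,\,n_x\cdot v\right)_\p-2\left(f_\ep g_\ep,\,n_x\cdot v\right)_\p+\left(g_\ep^2,\,n_x\cdot v\right)_\p$ than to rederive them for your inhomogeneous condition; with that adjustment your boundary analysis, which you correctly identify as the crux and the source of the $\sqrt\ep$ rate, goes through. Your density argument for \eqref{asympto1} and the well-prepared case are essentially the paper's. What your ansatz buys is a purely local-in-time differential inequality; what the paper's buys is that no corrector enters the quantity being estimated, so no $O(\ep)$ error from $\|\ep\,v\cdot\na_x\rho\|$ has to be reinstated at the end.

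There is one concrete gap: the layer residual. Your $\mathcal{R}_\ep$ contains $(v\cdot\na_x-\na_x\phi\cdot\na_v)\psi_\ep$, but the bound $\|(1+|v|)\na_x\psi_\ep\|\lesssim e^{-c_0t/\ep^2}\|(1+|v|)\na_x\psin\|$ controls only the $v\cdot\na_x\psi_\ep$ part. The term $\na_x\phi\cdot\na_v\psi_\ep$ is not covered: Lemma~\ref{layer} gives no bound on $v$-derivatives of $\psi_\ep$, and the hypotheses impose no $v$-regularity on $\fein$ (for $i=1$ one has explicitly $\psi_\ep=e^{-t/\ep^2}\psin$, so $\na_v\psi_\ep=e^{-t/\ep^2}\na_v\fein$, which need not lie in $L^2$). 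Moving the $v$-derivative onto $r_\ep$ via $\left(r_\ep,\,\na_x\phi\cdot\na_v\psi_\ep\right)=\left((v\cdot\na_x\phi)\,r_\ep,\,\psi_\ep\right)-\left(\na_x\phi\cdot\na_v r_\ep,\,\psi_\ep\right)$ repairs this for $i=2$, where the collision term dissipates $\ep^{-2}\int\|\na_v r_\ep\|^2$ and $\int_0^\infty\|\psi_\ep\|^2\lesssim\ep^2\|\psin\|^2$ by \eqref{layerdecay1}; for $i=1$ no dissipation of $\na_v r_\ep$ is available and this term requires a separate argument. (The paper's \eqref{entropy1} likewise records only the $v\cdot\na_x\psi_\ep$ contribution, so this point deserves care in either formulation.) The rest of your outline --- the cancellation of the orders $\ep^{-1}$ and $\ep^0$, the initial datum $r_\ep|_{t=0}=\ep\, v\cdot\na_x\rin$ of size $O(\ep)$, the bulk sources pairing only against $r_\ep^\perp$, and the $O(\ep)$ boundary source --- checks out.
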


\begin{proof}
The proof will proceed in three steps. 

\medskip\noindent\textit{Step 1. Evolution of the entropy functional.}\\
We abbreviate $g_\ep:=f_\ep-\psi_\ep$. The distance between $g_\ep$ and $\rho$ reads 
\begin{align*}
\|g_\ep-\rho\|^2=\|g_\ep\|^2+\|\rho\|^2-2(g_\ep,\,\rho).
\end{align*} 
Since $\{v_1,\ldots,v_d\}$ is a part of the orthonormal basis of $L^2(\R^d,\dif\mu)$ provided by the Hermite polynomials, we have 
\begin{align*} 
|\la vf_\ep\ra|^2=|\la vf_\ep^\perp\ra|^2
\le \la |f_\ep^\perp|^2\ra.
\end{align*}
It then follows from \eqref{dissipation-h} of Lemma~\ref{dissipation} that  
\begin{align*} 
\frac{1}{2}\frac{\dif}{\dif t}\|f_\ep\|^2 
\le-\ep^{-2}\|\la vf_\ep\ra\|^2 
-(2\ep)^{-1}\left(f_\ep^2,\,n_x\cdot v\right)_\p. 
\end{align*}
Taking the evolution of $\psi_\ep$ into account, we similarly derive 
\begin{align}\label{entropy1} 
\begin{aligned}
\frac{1}{2}\frac{\dif}{\dif t}\|g_\ep\|^2
&\le-\ep^{-2}\|\la vg_\ep\ra\|^2 
-\ep^{-1}\left(g_\ep,\,v\cdot\na_x\psi_\ep\right) 
-(2\ep)^{-1}\left(g_\ep^2,\,n_x\cdot v\right)_\p\\
&=-\ep^{-2}\|\la vg_\ep\ra\|^2 
-\ep^{-1}\left(f_\ep,\,v\cdot\na_x\psi_\ep\right) 
-(2\ep)^{-1}\left(g_\ep^2-\psi_\ep^2,\,n_x\cdot v\right)_\p. 
\end{aligned}
\end{align}
By the limiting equation~\eqref{limiteq} and the macroscopic relation~\eqref{hydroFP1}, we have 
\begin{align}\label{entropy2}
\begin{aligned}
\frac{\dif}{\dif t}(g_\ep,\,\rho)
=&\, \left(\la g_\ep\ra,\,\p_t\rho\right)+\left(\p_t\la f_\ep\ra,\,\rho\right)\\
=&\, \left(\ep^{-1}\la vg_\ep\ra-\na_x\la g_\ep\ra,\,\na_x\rho\right) +\ep^{-1}\left(\rho,\,v\cdot\na_x\psi_\ep\right)\\
 &\ +\left(g_\ep,\,n_x\cdot\na_x\rho\right)_\p -\ep^{-1}\left(f_\ep\rho,\,n_x\cdot v\right)_\p, 
\end{aligned}
\end{align}
and 
\begin{align}\label{entropy3}
\begin{aligned}
\frac{1}{2}\frac{\dif}{\dif t}\|\rho\|^2 
&=\left(\rho,\,\p_t\rho\right)\\
&=-\|\na_x\rho\|^2 +\left(\rho,\,n_x\cdot\na_x\rho\right)_\p.
\end{aligned}
\end{align}
We thus obtain from \eqref{entropy1}, \eqref{entropy2}, \eqref{entropy3} that 
\begin{align*}
\frac{1}{2}\frac{\dif}{\dif t}\|g_\ep-\rho\|^2
\le& -\left\|\ep^{-1}\la vg_\ep\ra+\na_x\rho\right\|^2 \\
& + \left(\ep^{-1}\la vg_\ep\ra+\na_x\la g_\ep\ra,\,\na_x\rho\right) 
 - \left(\rho+f_\ep,\,\ep^{-1}v\cdot\na_x\psi_\ep\right) \\
& + \left( \rho-g_\ep ,\,n_x\cdot\na_x\rho\right)_\p - (2\ep)^{-1}\left(g_\ep^2-\psi_\ep^2-2f_\ep\rho,\, n_x\cdot v\right)_\p. 
\end{align*}
Substituting $g_\ep=f_\ep-\psi_\ep$ back, we obtain 
\begin{align}\label{distevolution}
\begin{aligned}
\frac{1}{2}\frac{\dif}{\dif t}\|f_\ep-\rho-\psi_\ep\|^2
\le \left(\ep^{-1}\la vf_\ep\ra+\na_x\la f_\ep\ra,\,\na_x\rho\right) +\Sigma_1 +R_\psi, 
\end{aligned}
\end{align}
where we collected the terms 
\begin{align*}
\Sigma_1:=&\, \left( \rho-f_\ep+\psi_\ep ,\,n_x\cdot\na_x\rho\right)_\p 
- (2\ep)^{-1}\left(f_\ep^2-2f_\ep\rho-2f_\ep\psi_\ep+2\rho\psi_\ep,\, n_x\cdot v\right)_\p,\\
R_\psi:=&\, -\left(f_\ep,\,\ep^{-1}v\cdot\na_x\psi_\ep\right) - \left(\na_x\psi_\ep,\,\na_x\rho\right). 
\end{align*}
Since $\la vf_\ep\ra=\la vf_\ep^\perp\ra$ and $\la (v\otimes v-{\rm I}) f_\ep\ra=\la v\otimes v f_\ep^\perp\ra$, the macroscopic relation \eqref{hydroFP2} can be recast as 
\begin{align*}
\ep^{-1}\la vf_\ep\ra+\na_x\la f_\ep\ra
= -\ep\p_t\la vf_\ep^\perp\ra -\na_x\cdot \la v\otimes vf_\ep^\perp\ra
+\na_x\phi\cdot\la v\otimes vf_\ep^\perp\ra. 
\end{align*}
Therefore, we have 
\begin{align*}
\int_0^t \left(\ep^{-1}\la vf_\ep\ra+\na_x\la f_\ep\ra,\,\na_x\rho\right)
=\ep\int_0^t\left(\la vf_\ep^\perp\ra,\,\na_x\p_t\rho\right) 
+\int_0^t\left(\la v\otimes v f_\ep^\perp\ra,\, \na_x\otimes\na_x\rho\right)&\\ 
-\ep\left(\la vf_\ep^\perp\ra,\,\na_x\rho\right)(t) +\ep\left(\la vf_\ep^\perp\ra,\,\na_x\rho\right)(0)
-\int_0^t\left(\la v\otimes v f_\ep^\perp\ra,\, n_x\otimes\na_x\rho\right)_\p&. 
\end{align*}
Combining this with \eqref{distevolution} yields that  
\begin{align}\label{entropy-relative}
\begin{aligned}
\frac{1}{2}\|f_\ep-\rho-\psi_\ep\|^2 - \int_0^t(\Sigma_2+R_\psi)
\lesssim&\left( \int_0^t \left(\ep^2\|\na_x\p_t\rho\|^2 + \|D_x^2\rho\|^2\right) \right)^{\!\frac{1}{2}}
\left(\int_0^t\|f_\ep^\perp\|^2\right)^{\!\frac{1}{2}}\\
&\,+\ep\sup\nolimits_{[0,t]}\|\na_x\rho\|\|f_\ep\| , 
\end{aligned}
\end{align}
where the new collection $\Sigma_2$ of boundary terms is defined by 
\begin{align*}
\Sigma_2:=&\ \Sigma_1+\left(({\rm I}-v\otimes v)f_\ep,\,n_x\otimes\na_x\rho\right)_\p\\
=&\,-\left((v\cdot\na_x\rho)f_\ep,\,n_x\cdot v\right)_\p
+\left(\rho+\psi_\ep,\,n_x\cdot\na_x\rho\right)_\p\\
&\,-(2\ep)^{-1}\left(f_\ep^2-2f_\ep\rho-2f_\ep\psi_\ep+2\rho\psi_\ep,\, n_x\cdot v\right)_\p. 
\end{align*}

\medskip\noindent\textit{Step 2. Taking into account boundary conditions and initial layer correction.}\\ 
We have to address the boundary term $\Sigma_2$ and the initial layer term $R_\psi$ appeared in \eqref{entropy-relative}. Let us assume the compatibility condition on the initial data that $\fein$ depends only on the variable $x$ in $\p\O\times\R^d$. By \eqref{layereq}, we know that $\psi_\ep|_{\Sigma}$ depends only on the variable $x$. Consequently, according to the boundary condition of $f_\ep$, we have  
\begin{align*}
\left(f_\ep^2-2f_\ep\rho-2f_\ep\psi_\ep+2\rho\psi_\ep,\, n_x\cdot v\right)_\p
=\left(f_\ep^2,\,n_x\cdot v\right)_\p\ge0. 
\end{align*}
It turns out that 
\begin{align*}
\Sigma_2 =-\left((v\cdot\na_x\rho)f_\ep,\,n_x\cdot v\right)_\p
-(2\ep)^{-1}\left(f_\ep^2,\,n_x\cdot v\right)_\p. 
\end{align*}
By applying Lemma~\ref{identity-e} with $U:=\na_x\rho$ and $\beta:=1-\alpha$, the trace inequality that 
\begin{align*}
\|\na_x\rho\|_{L^2(\p\O)}\lesssim\|\rho\|_{H_x^2(\O)}, 
\end{align*}
and the Cauchy-Schwarz inequality, we have 
\begin{align*}
\left|\left((v\cdot\na_x\rho)f_\ep,\,n_x\cdot v\right)_\p\right|
&\lesssim \|\rho\|_{H_x^2(\O)} \left(f_\ep^2,\, n_x\cdot v\right)_\p^\frac{1}{2}\\
&\lesssim C_1\ep \|\rho\|_{H_x^2(\O)}^2 + C_1^{-1}\ep^{-1}\left(f_\ep^2,\, n_x\cdot v\right)_\p. 
\end{align*}
Here the constant $C_1>0$ is chosen to be sufficiently large so that 
\begin{align}\label{entropy-boundary}
\Sigma_2 \lesssim \ep \|\rho\|_{H_x^2(\O)}^2.
\end{align}

In order to estimate the term $R_\psi=-\left(\ep^{-1}vf_\ep+\na_x\rho,\,\na_x\psi_\ep\right)$, we apply \eqref{layerdecay2} of Lemma~\ref{layer} with $\psi:=\na_x\psi_\ep$ and $\psin:=\na_x\fein-\na_x\la\fein\ra$ to see that for some constant $c_0>0$, 
\begin{align*}
|R_\psi| 
&\le \ep^{-1}\left(\|f_\ep\|+\|\na_x\rho\|\right)\|(1+|v|)\na_x\psi_\ep\| \\
&\lesssim \ep^{-1}e^{-c_0t/\ep^2} \left(\|f_\ep\|+\|\na_x\rho\|\right)\|(1+|v|)\na_x\fein\|. 
\end{align*}
Integrating along time and noticing the fact that $\int_0^\infty e^{-c_0t/\ep^2}\dif t=\ep^2/c_0$, we obtain 
\begin{align}\label{entropy-remainder}
\int_0^t|R_\psi| \lesssim \ep \|(1+|v|)\na_x\fein\| \sup\nolimits_{[0,t]}(\|f_\ep\|+\|\na_x\rho\|). 
\end{align} 

\medskip\noindent\textit{Step 3. Conclusion and approximation.}\\ 
We are in a position to leverage the dissipation and the regularity of the limiting equation to draw our conclusion. Recall the parabolic estimate that for any $\rho$ solving \eqref{limiteq} and any $t>0$, 
\begin{align*}
\sup\nolimits_{[0,t]}\left(\|\rho\|_{H_x^1(\O)}^2+\|\p_t\rho\|^2\right) + \int_0^t\left(\|D_x^2\rho\|^2 +\|\na_x\p_t\rho\|^2\right) \lesssim \|\rin\|_{H^2(\O)}^2; 
\end{align*}
one may refer to \cite[\S~7.1.3]{Evans} in this regard. 
It then follows from \eqref{entropy-relative}, \eqref{entropy-boundary}, \eqref{entropy-remainder} that 
\begin{align*}
\|f_\ep-\rho-\psi_\ep\|^2
\lesssim \|\rin\|_{H^2(\O)} \left(\int_0^t\|f_\ep^\perp\|^2\right)^{\!\frac{1}{2}} +\ep\|\rin\|_{H^2(\O)}\sup\nolimits_{[0,t]}\|f_\ep\| +\ep\|\rin\|_{H^2(\O)}^2\\
+\ep\|(1+|v|)\na_x\fein\| \sup\nolimits_{[0,t]}\left(\|f_\ep\|+\|\rin\|_{H^2(\O)}\right)&.  
\end{align*} 
By the dissipation estimate \eqref{dissipationR} of Lemma~\ref{dissipation} and the Cauchy-Schwarz inequality, we have  
\begin{align}\label{fepr}
\|f_\ep-\rho-\psi_\ep\|^2
\lesssim \ep \left(\|\fein\|^2 +\|(1+|v|)\na_x\fein\|^2 +\|D_x^2\fein\|^2 \right).  
\end{align} 
We thus obtain \eqref{asympto2} under the compatibility assumption on $\fein$. We remark that \eqref{asympto3} can be derived in a similar manner, based on the observation that the well-preparedness of $\fein$ implies $\psi_\ep\equiv0$. 

As for general initial data $\fein\in L^2(\O\times\R^d,\dif m)$, we consider its approximation 
\begin{align*}
\fein^N:=\varphi_N\fein*\varrho_N. 
\end{align*}
Here $N\in\mathbb{N}_+$, $\varrho_N(x):=N^d\varrho_1(Nx)$ with $0\le\varrho_1(x)\in C^\infty_0(\R^d)$ such that $\int_{\R^d}\varrho_1(x)\dif x=1$, and $\varphi_N(x,v)\in C^\infty_0(\O\times\R^d)$ satisfies 
\begin{align*}
0\le\varphi_N(x,v)\le1,\quad
|\na_x\varphi_N(x,v)|\lesssim N,\quad
|D^2_x\varphi_N(x,v)|\lesssim N^2 {\quad\rm in\ } \O\times\R^d, \\
\varphi_N(x,v)=1 {\quad\rm in\ }
\{(x,v)\in\O\times\R^d:\, {\rm dist}(x,\p\O)\ge N^{-1} {\rm\ and\ } |v|\le N\},\\
\varphi_N(x,v)=0 
{\quad\rm in\ }\{(x,v)\in\O\times\R^d:\, {\rm dist}(x,\p\O)\le (2N)^{-1} {\rm\ or\ } |v|\ge 2N\}. 
\end{align*}  
In this setting, we know that 
\begin{align*}
\|\fein-\fein^N\|\rightarrow0 {\quad\rm as\ } N\rightarrow\infty,  
\end{align*}
and 
\begin{align*}
\|(1+|v|)\na_x\fein^N\| + \|D_x^2\fein^N\|
\lesssim \|(|v||\na_x\varphi_N|+|D_x^2\varphi_N|)\fein*\varrho_N\| \\
+ \|(|v|\varphi_N+|\na_x\varphi_N|)\fein*(\na_x\varrho_N)\| 
+ \|\varphi_N\fein*(D_x^2\varrho_N)\|&.
\end{align*} 
By using Young's inequality, we obtain
\begin{align}\label{fepr1}
\begin{aligned}
\|(1+|v|)\na_x\fein^N\| + \|D_x^2\fein^N\|\\
\lesssim\left(N^2\|\varrho_N\|_{L^1(\R^d)} +N\|\na_x\varrho_N\|_{L^1(\R^d)} +\|D_x^2\varrho_N\|_{L^1(\R^d)}\right) \|\fein\|\\
\lesssim N^2\|\fein\|&. 
\end{aligned}
\end{align} 
Let $f_\ep^N$, $\rho^N$, $\psi_\ep^N$ be the solutions to \eqref{rfp}, \eqref{limiteq}, \eqref{layereq} associated with the initial conditions
\begin{align*}
f_\ep^N|_{t=0}&=\fein^N,\\
\rho^N|_{t=0}&=\la\fein^N\ra,\\
\psi_\ep^N|_{t=0}&=\fein^N-\la\fein^N\ra. 
\end{align*} 
In view of the dissipation estimates for \eqref{rfp} and \eqref{layerdecay1} provided by Lemma~\ref{dissipation} and Lemma~\ref{layer} respectively, as well as the standard dissipation property for \eqref{limiteq}, we have 
\begin{align}\label{fepr2}
\begin{aligned}
\|f_\ep-f_\ep^N\| \le \|\fein-\fein^N\|,\\
\|\rho-\rho^N\| \le \|\la\fein\ra-\la\fein^N\ra\|\le \|\fein-\fein^N\|, \\
\|\psi_\ep-\psi_\ep^N\| \le \|\fein-\la\fein\ra-\fein^N+\la\fein^N\ra\|
\le 2\|\fein-\fein^N\|. 
\end{aligned}
\end{align} 
According to \eqref{fepr}, \eqref{fepr1}, \eqref{fepr2}, we obtain  
\begin{align*}
\|f_\ep-\rho-\psi_\ep\| 
&\le \|f_\ep-f_\ep^N\| + \|\rho-\rho^N\|+ \|\psi_\ep-\psi_\ep^N\| + \|f_\ep^N-\rho^N-\psi_\ep^N\| \\
&\lesssim \|\fein-\fein^N\| + \sqrt{\ep} N^2\|\fein\|.  
\end{align*} 
Sending $\ep\rightarrow0$ and then $N\rightarrow\infty$ yields \eqref{asympto1}. This completes the proof. 
\end{proof}

Theorem~\ref{asympto} is a direct consequence of Proposition~\ref{asympto0}. 

\begin{proof}[Proof of Theorem~\ref{asympto}]
By the dissipation estimate \eqref{layerdecay1} of Lemma~\ref{layer} and Gr\"onwall's inequality, we know that 
\begin{align*}
\|\psi_\ep\| \le e^{-t/\ep^2}\|\psin\|
\le e^{-t/\ep^2}\|\fein\|. 
\end{align*}
On the one hand, integrating along time, we obtain  
\begin{align*}
\|\psi_\ep\|_{L^2(\R_+\times\O\times\R^d,\;\!\dif t\dif m)} 
\le \|e^{-t/\ep^{2}}\|_{L^2(\R_+)} \|\fein\|
\le \ep\|\fein\|, 
\end{align*}
so that for any $T>0$, we have 
\begin{align}\label{psi1}
\|f_\ep-\rho\|_{L^2([0,T]\times\O\times\R^d,\;\!\dif t\dif m)} 
\le \sqrt{T}\|f_\ep-\rho-\psi_\ep\|_{L^\infty(\R_+;\;\!L^2(\O\times\R^d,\;\!\dif m))} + \ep\|\fein\|.  
\end{align}
On the other hand, for any $t\ge\tau(\ep)$, we have  
\begin{align}\label{psi2}
\begin{aligned}
\|(f_\ep-\rho)(t)\| 
&\le \|(f_\ep-\rho-\psi_\ep)(t)\| + \|\psi_\ep(t)\|\\
&\le \|(f_\ep-\rho-\psi_\ep)(t)\| + e^{-\tau(\ep)/\ep^2}\|\fein\|.
\end{aligned} 
\end{align}
In view of \eqref{psi1} and \eqref{psi2}, together with Proposition~\ref{asympto0}, we conclude the proof. 
\end{proof}

\appendix
\section{Elliptic estimates under the Robin boundary condition}\label{append}
This appendix is dedicated to clarifying the elliptic estimates used in Section~\ref{hypocoercive}. Though standard, we include their proofs below for the sake of completeness. 

\begin{lemma}\label{elliptic-est}
Let $\O$ be a bounded $C^{1,1}$-domain, the function $\phi:\O\rightarrow\R$ be Lipschitz, the source term $S\in L^2(\O)$, and the coefficient $c_b:\p\O\rightarrow[0,\infty)$. Then there exists a unique solution $u$ to 
\begin{align}\label{elliptic-S}
\left\{ 
\begin{aligned}
\ & u-\Delta_x u+ \na_x\phi\cdot\na_xu=S {\quad\rm in\ }\O, \\
\ & n_x\cdot\na_xu +c_bu =0 {\quad\rm on\ }\p\O. \\
\end{aligned}
\right. 
\end{align} 
Supposing that $c_b$ is Lipschitz and satisfies $|\na c_b|\le C_0 c_b$ on $\p\O$ for some constant $C_0>0$, we then have the regularity estimate 
\begin{align}\label{elliptic-regularity}
\|u\| + \|\na_x u\| + \|D^2_x u\| + \|\sqrt{c_b}u\|_\p \le C \|S\| .  
\end{align}
If additionally either $c_b|_{\p\O}=\left(S,\,1\right)=0$, or $c_b|_{\p\O}\ge\delta_b$ for some constant $\delta_b>0$, then we have the Poincar\'e type inequality
\begin{align}\label{elliptic-Poincare}
\|u\|  \le C\left( \|\na_x u\|  + \|\sqrt{c_b}u\|_\p \right). 
\end{align}
Furthermore, under the above assumptions, we have 
\begin{align}\label{elliptic-z}
\|S\|  + \|u\|  + \|\na_xu\|  + \|D_x^2u\|  + \|\sqrt{c_b}u\|_\p 
\le C \left(S-u,\,S\right)^\frac{1}{2} . 
\end{align} 
Here the constant $C>0$ depends only on $d,\O,\|\phi\|_{C^{0,1}(\O)},C_0,\delta_b$. 
\end{lemma}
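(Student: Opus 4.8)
The plan is to prove Lemma~\ref{elliptic-est} in the natural order in which the four assertions build on one another: existence/uniqueness, the $H^2$-regularity estimate \eqref{elliptic-regularity}, the Poincaré inequality \eqref{elliptic-Poincare}, and finally the combined bound \eqref{elliptic-z}. First I would establish existence and uniqueness via the Lax--Milgram theorem. The natural bilinear form, obtained by multiplying \eqref{elliptic-S} by $e^{-\phi}w$ and integrating by parts, is
\begin{align*}
B(u,w):=\left(u,w\right)+\left(\na_x u,\na_x w\right)+\left(c_b\;\! u,w\right)_\p,
\end{align*}
acting on $H^1(\O)$ with the weight $e^{-\phi}$; here the first-order term $\na_x\phi\cdot\na_x u$ is absorbed because it combines with the Laplacian after integrating the weighted divergence. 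Coercivity is immediate from the presence of the zeroth-order term $u$ and the sign condition $c_b\ge0$ (so the boundary term is nonnegative), and continuity follows from the trace theorem; thus a unique weak solution exists in $H^1(\O)$.

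Next I would derive \eqref{elliptic-regularity}. Testing the weak formulation against $u$ itself gives $\|u\|^2+\|\na_x u\|^2+\|\sqrt{c_b}u\|_\p^2=\left(S,u\right)\le\|S\|\,\|u\|$, which controls the first two terms and the boundary term by $\|S\|$ after a Cauchy--Schwarz and absorption. The second-order bound $\|D_x^2 u\|\lesssim\|S\|$ is the genuine elliptic-regularity step: on a $C^{1,1}$ domain with the Robin condition $n_x\cdot\na_x u+c_b u=0$, standard theory (e.g.\ difference quotients in tangential directions plus the equation for the normal second derivative) gives $H^2$-regularity. The hypothesis $|\na c_b|\le C_0 c_b$ is exactly what is needed to differentiate the Robin condition tangentially without losing control: it lets one bound the tangential derivative of the boundary data $c_b u$ by $\sqrt{c_b}$-weighted quantities already controlled, rather than by an uncontrolled $\|u\|_{H^1(\p\O)}$. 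I would cite or sketch this and treat the first-order term $\na_x\phi\cdot\na_x u$ as a lower-order perturbation absorbed by the already-bounded $\|\na_x u\|$.

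For the Poincaré inequality \eqref{elliptic-Poincare} I would argue by contradiction/compactness. Suppose no such constant exists; then there is a sequence $u_k$ with $\|u_k\|=1$ but $\|\na_x u_k\|+\|\sqrt{c_b}u_k\|_\p\to0$. By Rellich compactness $u_k$ converges strongly in $L^2$ to some $u_*$ with $\|u_*\|=1$ and $\na_x u_*=0$, so $u_*$ is a constant. In the case $c_b|_{\p\O}\ge\delta_b>0$, the boundary term forces $\sqrt{c_b}u_*|_\p=0$, hence $u_*\equiv0$, contradicting $\|u_*\|=1$. In the degenerate case $c_b|_{\p\O}=0$ with $(S,1)=0$, the constraint must instead be read off from the equation: here the constant function still has norm one, so the contradiction comes from the extra orthogonality, namely that the relevant solutions satisfy $(u_*,1)=0$ for constants, forcing $u_*=0$. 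I would make this compatibility precise by noting that when $c_b|_{\p\O}=0$ the form $B$ is coercive only on the mean-zero subspace, which is where the solution lives under $(S,1)=0$.

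Finally, \eqref{elliptic-z} follows by combining the previous two. The energy identity gives $\left(S-u,S\right)=\|\na_x u\|^2+\|\sqrt{c_b}u\|_\p^2$ (since $\left(S,u\right)-\|u\|^2=\left(S-u,u\right)$ and $\left(S,u\right)=\|u\|^2+\|\na_x u\|^2+\|\sqrt{c_b}u\|_\p^2$, so $\left(S-u,S\right)=\left(S-u,u\right)+\left(S-u,S-u\right)$ rearranges to the stated quantity), so the right-hand side $\left(S-u,S\right)^{1/2}$ controls both $\|\na_x u\|$ and $\|\sqrt{c_b}u\|_\p$ directly; then \eqref{elliptic-Poincare} upgrades this to control of $\|u\|$, the equation gives $\|S\|\lesssim\|u\|+\|D_x^2 u\|+\|\na_x u\|$, and \eqref{elliptic-regularity} closes the loop on $\|D_x^2 u\|$. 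The main obstacle is the $H^2$-regularity step under the weighted Robin condition with a possibly degenerate $c_b$; the condition $|\na c_b|\le C_0 c_b$ is the key structural assumption that makes the boundary term manageable, and I would spend most care there, handling the $c_b|_{\p\O}=0$ and $c_b|_{\p\O}\ge\delta_b$ cases of the Poincaré inequality separately as indicated.
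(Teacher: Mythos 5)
Your strategy coincides with the paper's: variational existence (the paper cites Lieberman), the energy identity from testing against $u$ with the weight $e^{-\phi}$, tangential difference quotients combined with the hypothesis $|\na c_b|\le C_0c_b$ for the $H^2$ bound, a Poincaré inequality split into the degenerate and nondegenerate cases, and the algebra of $(S-u,\,S)$ at the end. Two steps need repair, though both are fixable with material you already have. First, $(S-u,\,S)=\|\na_xu\|^2+\|\sqrt{c_b}u\|_\p^2$ is not an identity: your own decomposition $(S-u,\,S)=(S-u,\,u)+\|S-u\|^2$ gives $(S-u,\,S)=\|\na_xu\|^2+\|\sqrt{c_b}u\|_\p^2+\|S-u\|^2$, i.e. only the inequality $\ge$ holds; this is harmless since that is the direction you use. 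Second, your final chain for $\|S\|$ and $\|D_x^2u\|$ is circular as written: you bound $\|S\|\lesssim\|u\|+\|\na_xu\|+\|D_x^2u\|$ and then invoke $\|D_x^2u\|\lesssim\|S\|$ from \eqref{elliptic-regularity}, which yields nothing. The repair is immediate from the corrected identity: $\|S-u\|^2\le(S-u,\,S)$, hence $\|S\|\le\|S-u\|+\|u\|\lesssim(S-u,\,S)^{1/2}$ once $\|u\|$ is controlled via \eqref{elliptic-Poincare}, and only then does \eqref{elliptic-regularity} give $\|D_x^2u\|\lesssim\|S\|\lesssim(S-u,\,S)^{1/2}$. (The paper instead expands $(S-u,\,S)=\|S\|^2-\|u\|^2-\|\na_xu\|^2-\|\sqrt{c_b}u\|_\p^2$, which achieves the same.)

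In the degenerate case of \eqref{elliptic-Poincare}, your justification that the solution is mean-zero is also off: the form $B$ is coercive on all of $H^1(\O)$ because of the zeroth-order term, so coercivity is not what singles out the mean-zero subspace. The correct reason is that integrating the equation against the weight $e^{-\phi}$ gives $(u,\,1)=(S,\,1)+(n_x\cdot\na_xu,\,1)_\p=-(c_bu,\,1)_\p=0$ when $c_b\equiv0$ and $(S,\,1)=0$; after that, your compactness argument on the mean-zero class (or the standard Poincaré inequality with mean, as the paper uses) goes through. Your compactness proof of the nondegenerate case and the sketch of the $H^2$ step are consistent with the paper's boundary-flattening, tangential-derivative argument, where $|\na c_b|\le C_0c_b$ is indeed exactly what controls the differentiated Robin condition.
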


\begin{remark}
For any Lipschitz function $c_b:\p\O\rightarrow[0,\infty)$ such that $|\na c_b|\le C_0c_b$ on $\p\O$ and $c_b(x_0)=0$ for some constant $C_0>0$ and some point $x_0\in\p\O$, it can be shown that $c_b\equiv0$ on $\p\O$. In light of this observation, we may assume that $c_b$ is strictly positive whenever it is not identically zero.
\end{remark}

\begin{proof}
First of all, one may find the existence and uniqueness results in \cite[Corollary 5.27]{Lieberman}. One may also refer to \cite[\S~5.7]{Lieberman} for the regularity issues. We sketch the proof of \eqref{elliptic-regularity} in our scenario. Indeed, taking the inner product with $u$ on both sides of \eqref{elliptic-S} yields that 
\begin{align*}
\|u\|^2 + \|\na_xu\|^2 
&= \left(S,\,u\right)  + \left(n_x\cdot\na_xu,\,u\right)_\p\\
&= \left(S,\,u\right) - \|\sqrt{c_b}u\|_\p^2, 
\end{align*}
which implies from the Cauchy-Schwarz inequality that 
\begin{align}\label{eu1}
\|u\| + \|\na_xu\| + \|\sqrt{c_b}u\|_\p \lesssim \|S\| . 
\end{align}
In order to derive higher regularity of $u$, we consider $R>0$ and $\overline{x}\in\p\O$, and assume for simplicity that $\p\O\cap B_R(\overline{x})=\{x\in B_R(\overline{x}):x_d=0\}$ and $\O\cap B_R(\overline{x})=\{x\in B_R(\overline{x}):x_d>0\}$; it can be verified that a similar computation as the one below remains applicable for general curved $C^{1,1}$-domains based on a boundary flattening argument. Pick a cut-off function $\eta\in C^\infty_0(B_R(\overline{x}))$ valued in $[0,1]$ such that $\eta|_{B_{R/2}(\overline{x})}=1$. Let us abbreviate $\p_k=\p_{x_k}$ for $k\in\{1,\ldots,d\}$. Taking the inner product with $\eta^2\p_ku$ on both sides of the equation satisfies by $\p_ku$ yields that for any $k\in\{1,\ldots,d-1\}$, 
\begin{align*}
\left(\eta^2\p_ku,\,\p_ku\right) + \left(\na_x(\eta^2\p_ku),\,\na_x\p_ku\right) 
=&\left(\p_k(\eta^2\p_ku),\,\na_x\phi\cdot\na_xu\right)
+\left(\eta^2\p_ku,\,\na_x\phi\cdot\na_x\p_ku\right) \\
&-\left(\p_k(\eta^2\p_ku),\,S\right) -\left(\eta^2\p_ku,\,\p_k(c_bu)\right)_\p. 
\end{align*}
Here we notice that the boundary term 
\begin{align*}
\left(\eta^2\p_ku,\,\p_k(c_bu)\right)_\p =\left(\eta^2\p_ku,\, u\p_kc_b \right)_\p + \|\sqrt{c_b}\eta\p_ku\|_\p^2,
\end{align*} 
and due to the assumption that $|\p_kc_b|\le C_0|c_b|$, 
\begin{align*}
\left|\left(\eta^2\p_ku,\, u\p_kc_b \right)_\p\right| \le C_0\|\sqrt{c_b}\eta u\|_\p \|\sqrt{c_b}\eta\p_ku\|_\p.
\end{align*}
Applying the Cauchy-Schwarz inequality, we deduce 
\begin{align*}
\|\eta\na_x\p_ku\| + \|\sqrt{c_b}\eta\p_ku\|_\p
\lesssim \|(\eta+|\na_x\eta|)\p_ku\| + \|\eta S\| +\|\sqrt{c_b}\eta u\|_\p. 
\end{align*}
We observe from \eqref{elliptic-S} that 
\begin{align*}
\p_d^2u=-S+u-\sum\nolimits_{k=1}^{d-1}\p_k^2u+\na_x\phi\cdot\na_xu
\end{align*}
so that 
\begin{align}\label{eu2}
\|D_x^2u\| \lesssim \|\na_xu\| + \|S\| +\|\sqrt{c_b}u\|_\p. 
\end{align}
Gathering \eqref{eu1} and \eqref{eu2}, we arrive at \eqref{elliptic-regularity}. 

As for \eqref{elliptic-Poincare}, we first assume that $c_b|_{\p\O}=\left(S,\,1\right)=0$. Combining the Poincar\'e inequality that
\begin{align*}
\|u\|\lesssim\|\na u\|+|(u,\,1)|
\end{align*}
with the fact that 
\begin{align*}
(u,\,1) = (n_x\cdot\na_xu,\,1)_\p =0, 
\end{align*}
we obtain \eqref{elliptic-Poincare}. Otherwise, for $c_b|_{\p\O}\ge\delta_b>0$, the Poincar\'e inequality (see for instance \cite[Lemma~5.22]{Lieberman}) that 
\begin{align*}
\|u\| \lesssim \|\na_xu\| + \|u\|_\p
\end{align*}
also implies \eqref{elliptic-Poincare}. 

Finally, in view of \eqref{elliptic-S}, we notice that 
\begin{align*}
\left(S-u,\,S\right)  &= \left(-\Delta_x u+ \na_x\phi\cdot\na_xu,\,u-\Delta_x u+ \na_x\phi\cdot\na_xu\right) \\
&\ge \left(-\Delta_x u+ \na_x\phi\cdot\na_xu,\,u\right)\\
&= \|\na_xu\|^2 - \left(n_x\cdot\na_xu ,\,u\right)_\p\\
&= \|\na_xu\|^2 + \|\sqrt{c_b}u\|_\p^2, 
\end{align*} 
and
\begin{align*}
\left(S-u,\,S\right)  &= \|S\|^2 - \left(u,\,u-\Delta_x u+ \na_x\phi\cdot\na_xu\right) \\
&=\|S\|^2 - \|u\|^2 - \|\na_xu\|^2 - \|\sqrt{c_b}u\|_\p^2.  
\end{align*} 
Combining these two facts with \eqref{elliptic-regularity} and \eqref{elliptic-Poincare}, we conclude \eqref{elliptic-z}. 
The proof is complete. 
\end{proof}


\begin{thebibliography}{DHHM22}
\bibitem[AAMN19]{AM2021}
Dallas Albritton, Scott Armstrong, Jean-Christophe Mourrat, and Matthew Novack.
\newblock Variational methods for the kinetic {F}okker-{P}lanck equation.
\newblock {\em Anal. PDE}, 17(6):1953--2010, 2024.
	
\bibitem[AZ24]{AZ}
Francesca Anceschi and Yuzhe Zhu.
\newblock On a spatially inhomogeneous nonlinear {F}okker-{P}lanck equation: {C}auchy problem and diffusion asymptotics.
\newblock {\em Anal. PDE}, 17(2):379--420, 2024.

\bibitem[BCMT23]{BCMT}
Armand Bernou, Kleber Carrapatoso, St\'{e}phane Mischler, and Isabelle Tristani.
\newblock Hypocoercivity for kinetic linear equations in bounded domains with general {M}axwell boundary condition.
\newblock {\em Ann. Inst. H. Poincar\'{e} C Anal. Non Lin\'{e}aire}, 40(2):287--338, 2023.
	
\bibitem[BDM{\etalchar{+}}20]{BDMMS}
Emeric Bouin, Jean Dolbeault, St\'{e}phane Mischler, Cl\'{e}ment Mouhot, and Christian Schmeiser.
\newblock Hypocoercivity without confinement.
\newblock {\em Pure Appl. Anal.}, 2(2):203--232, 2020.
	
\bibitem[BGL93]{BGL}
Claude Bardos, Fran\c{c}ois Golse, and C.~David Levermore.
\newblock Fluid dynamic limits of kinetic equations. {II}. {C}onvergence proofs for the {B}oltzmann equation.
\newblock {\em Comm. Pure Appl. Math.}, 46(5):667--753, 1993.

\bibitem[BLP79]{BLP}
Alain Bensoussan, Jacques-L. Lions, and George~C. Papanicolaou.
\newblock Boundary layers and homogenization of transport processes.
\newblock {\em Publ. Res. Inst. Math. Sci.}, 15(1):53--157, 1979.

\bibitem[BSS84]{BSS}
Claude Bardos, Rafael Santos, and R\'emi Sentis.
\newblock Diffusion approximation and computation of the critical size.
\newblock {\em Trans. Amer. Math. Soc.}, 284(2):617--649, 1984.

\bibitem[DD23]{DD}
Dingqun Deng and Renjun Duan.
\newblock Spectral gap formation to kinetic equations with soft potentials in bounded domain.
\newblock {\em Comm. Math. Phys.}, 397(3):1441--1489, 2023.

\bibitem[DHHM22]{DHHM}
Helge Dietert, Fr{\'e}d{\'e}ric H{\'e}rau, Harsha Hutridurga, and Cl{\'e}ment Mouhot.
\newblock Quantitative geometric control in linear kinetic theory.
\newblock {\em arXiv preprint arXiv:2209.09340}, 2022.

\bibitem[DMG87]{Degond}
Pierre Degond and Sylvie Mas-Gallic.
\newblock Existence of solutions and diffusion approximation for a model {F}okker-{P}lanck equation.
\newblock In {\em Proceedings of the conference on mathematical methods applied to kinetic equations ({P}aris, 1985)}, volume~16, pages 589--636, 1987.

\bibitem[DMS09]{DMS1}
Jean Dolbeault, Cl\'{e}ment Mouhot, and Christian Schmeiser.
\newblock Hypocoercivity for kinetic equations with linear relaxation terms.
\newblock {\em C. R. Math. Acad. Sci. Paris}, 347(9-10):511--516, 2009.

\bibitem[DMS15]{DMS2}
Jean Dolbeault, Cl\'{e}ment Mouhot, and Christian Schmeiser.
\newblock Hypocoercivity for linear kinetic equations conserving mass.
\newblock {\em Trans. Amer. Math. Soc.}, 367(6):3807--3828, 2015.

\bibitem[EGKM13]{EGKM}
Raffaele Esposito, Yan Guo, Chanwoo Kim, and Rossana Marra.
\newblock Non-isothermal boundary in the {B}oltzmann theory and {F}ourier law.
\newblock {\em Comm. Math. Phys.}, 323(1):177--239, 2013.
	
\bibitem[Eva10]{Evans}
Lawrence~C. Evans.
\newblock {\em Partial differential equations}, volume~19 of {\em Graduate Studies in Mathematics}.
\newblock American Mathematical Society, Providence, RI, second edition, 2010.

\bibitem[FSGM23]{FGM}
Claudia Fonte~Sanchez, Pierre Gabriel, and St{\'e}phane Mischler.
\newblock On the {Krein-Rutman} theorem and beyond.
\newblock {\em arXiv preprint arXiv:2305.06652}, 2023.

\bibitem[Her06]{Herau}
Fr\'ed\'eric H\'erau.
\newblock Hypocoercivity and exponential time decay for the linear inhomogeneous relaxation {B}oltzmann equation.
\newblock {\em Asymptot. Anal.}, 46(3-4):349--359, 2006.

\bibitem[Lie13]{Lieberman}
Gary~M. Lieberman.
\newblock {\em Oblique derivative problems for elliptic equations}.
\newblock World Scientific Publishing Co. Pte. Ltd., Hackensack, NJ, 2013.

\bibitem[LK74]{LK}
Edward~W. Larsen and Joseph~B. Keller.
\newblock Asymptotic solution of neutron transport problems for small mean free paths.
\newblock {\em J. Mathematical Phys.}, 15:75--81, 1974.

\bibitem[Mis10]{Mischler}
St\'{e}phane Mischler.
\newblock Kinetic equations with {M}axwell boundary conditions.
\newblock {\em Ann. Sci. \'{E}c. Norm. Sup\'{e}r. (4)}, 43(5):719--760, 2010.

\bibitem[Ouy24]{Ouyang}
Zhimeng Ouyang.
\newblock Diffusive {L}imit of the {U}nsteady {N}eutron {T}ransport {E}quation in {B}ounded {D}omains.
\newblock {\em J. Stat. Phys.}, 191(6):Paper No. 74, 2024.

\bibitem[SR09]{LSR}
Laure Saint-Raymond.
\newblock {\em Hydrodynamic limits of the {B}oltzmann equation}, volume 1971 of {\em Lecture Notes in Mathematics}.
\newblock Springer-Verlag, Berlin, 2009.

\bibitem[Vil09]{Villanihypo}
C\'{e}dric Villani.
\newblock Hypocoercivity.
\newblock {\em Mem. Amer. Math. Soc.}, 202(950):iv+141, 2009.

\bibitem[WG15]{WG}
Lei Wu and Yan Guo.
\newblock Geometric correction for diffusive expansion of steady neutron transport equation.
\newblock {\em Comm. Math. Phys.}, 336(3):1473--1553, 2015.

\bibitem[Wu21]{Wu3}
Lei Wu.
\newblock Diffusive limit of transport equation in 3{D} convex domains.
\newblock {\em Peking Math. J.}, 4(2):203--284, 2021.

\bibitem[Zhu22]{YZ}
Yuzhe Zhu.
\newblock Regularity of kinetic {Fokker-Planck} equations in bounded domains.
\newblock {\em arXiv preprint arXiv:2206.04536}, 2022.
\end{thebibliography}

\newcommand{\etalchar}[1]{$^{#1}$}

\end{document}